\date{\today}
\newtheorem{theorem}{Theorem}[section]
\newtheorem{proposition}[theorem]{Proposition}
\newtheorem{corollary}[theorem]{Corollary}
\newtheorem{lemma}[theorem]{Lemma}
\theoremstyle{definition}
\newtheorem{definition}[theorem]{Definition}
\newtheorem{example}[theorem]{Example}
\newtheorem{remark}[theorem]{Remark}
\begin{document}

\title[On the semigroup $\boldsymbol{B}_{\omega}^{\mathscr{F}}$ which is generated by the family $\mathscr{F}$ of atomic subsets of $\omega$]{On the semigroup $\boldsymbol{B}_{\omega}^{\mathscr{F}}$ which is generated by the family $\mathscr{F}$ of atomic subsets of $\omega$}
\author{Oleg Gutik and Oleksandra Lysetska}
\address{Faculty of Mechanics and Mathematics,
Lviv University, Universytetska 1, Lviv, 79000, Ukraine}
\email{oleg.gutik@lnu.edu.ua, o.yu.sobol@gmail.com}

\keywords{Semitopological semigroup, topological semigroup, bicyclic monoid, inverse semigroup, feebly compact, compact, Brandt $\omega$-extension, closure.}

\subjclass[2020]{Primary 22A15, 20A15,  Secondary 54D10, 54D30, 54H12}

\begin{abstract}
We study the semigroup $\boldsymbol{B}_{\omega}^{\mathscr{F}}$, which is introduced in [O. Gutik and M. Mykhalenych, \emph{On some generalization of the bicyclic monoid}, Visnyk Lviv. Univ. Ser. Mech.-Mat. \textbf{90} (2020), 5--19], in the case when the family $\mathscr{F}$ of subsets of cardinality $\leqslant 1$ in $\omega$. We show that $\boldsymbol{B}_{\omega}^{\mathscr{F}}$ is isomorphic to the  subsemigroup $\mathscr{B}_{\omega}^{\Rsh}(\boldsymbol{F}_{\min})$ of the Brandt $\omega$-extension of the semilattice $\boldsymbol{F}_{\min}$ and describe all shift-continuous feebly compact $T_1$-topologies on the semigroup $\mathscr{B}_{\omega}^{\Rsh}(\boldsymbol{F}_{\min})$. In particulary we prove that every shift-continuous feebly compact $T_1$-topology $\tau$ on  $\mathscr{B}_{\omega}^{\Rsh}(\boldsymbol{F}_{\min})$ is compact and moreover in this case  the space $(\mathscr{B}_{\omega}^{\Rsh}(\boldsymbol{F}_{\min}),\tau)$ is homeomorphic to the one-point Alexandroff compactification of the discrete countable space $\mathfrak{D}(\omega)$. We study the closure of $\boldsymbol{B}_{\omega}^{\mathscr{F}}$ in a semitopological semigroup. In particularly we show that $\boldsymbol{B}_{\omega}^{\mathscr{F}}$ is algebraically complete in the class of Hausdorff semitopological inverse semigroups with continuous inversion, and a Hausdorff topological inverse semigroup $\boldsymbol{B}_{\omega}^{\mathscr{F}}$ is closed in any Hausdorff topological semigroup if and only if the band $E(\boldsymbol{B}_{\omega}^{\mathscr{F}})$ is compact.
\end{abstract}

\maketitle


\section{\textbf{Introduction, motivation and main definitions}}

We shall follow the terminology of~\cite{Carruth-Hildebrant-Koch-1983, Clifford-Preston-1961, Clifford-Preston-1967, Engelking-1989, Ruppert-1984}. By $\omega$ we denote the set of all non-negative integers.

Let $\mathscr{P}(\omega)$ be  the family of all subsets of $\omega$. For any $F\in\mathscr{P}(\omega)$ and $n,m\in\omega$ we put
\begin{equation*}
n-m+F=\{n-m+k\colon k\in F\}
\end{equation*}
This definition implies that $n-m+F=\varnothing$ if $F=\varnothing$. A subfamily $\mathscr{F}\subseteq\mathscr{P}(\omega)$ is called \emph{${\omega}$-closed} if $F_1\cap(-n+F_2)\in\mathscr{F}$ for all $n\in\omega$ and $F_1,F_2\in\mathscr{F}$.

A semigroup $S$ is called {\it inverse} if for any
element $x\in S$ there exists a unique $x^{-1}\in S$ (called the {\it inverse of} $x$) such that
$xx^{-1}x=x$ and $x^{-1}xx^{-1}=x^{-1}$.  If $S$ is an inverse
semigroup, then the function $\operatorname{inv}\colon S\to S$
which assigns to every element $x$ of $S$ its inverse element
$x^{-1}$ is called the {\it inversion}.

If $S$ is a semigroup, then we shall denote the subset of all
idempotents in $S$ by $E(S)$. If $S$ is an inverse semigroup, then
$E(S)$ is closed under multiplication and we shall refer to $E(S)$ as a
\emph{band} (or the \emph{band of} $S$). The semigroup
operation of $S$ determines the following partial order $\preccurlyeq$
on $E(S)$: $e\preccurlyeq f$ if and only if $ef=fe=e$. This order is
called the {\em natural partial order} on $E(S)$. A \emph{semilattice} is a commutative semigroup of idempotents. By $(\omega,\min)$ or $\omega_{\min}$ we denote the set $\omega$ with the semilattice operation $x\cdot y=\min\{x,y\}$.

If $S$ is an inverse semigroup then the semigroup operation on $S$ determines the following partial order $\preccurlyeq$
on $S$: $s\preccurlyeq t$ if and only if there exists $e\in E(S)$ such that $s=te$. This order is
called the {\em natural partial order} on $S$ \cite{Wagner-1952}.

The bicyclic monoid ${\mathscr{C}}(p,q)$ is the semigroup with the identity $1$ generated by two elements $p$ and $q$ subjected only to the condition $pq=1$. The semigroup operation on ${\mathscr{C}}(p,q)$ is determined as
follows:
\begin{equation*}
    q^kp^l\cdot q^mp^n=q^{k+m-\min\{l,m\}}p^{l+n-\min\{l,m\}}.
\end{equation*}
It is well known that the bicyclic monoid ${\mathscr{C}}(p,q)$ is a bisimple (and hence simple) combinatorial $E$-unitary inverse semigroup and every non-trivial congruence on ${\mathscr{C}}(p,q)$ is a group congruence \cite{Clifford-Preston-1961}.

On the set $\boldsymbol{B}_{\omega}=\omega\times\omega$ we define a semigroup operation ``$\cdot$'' in the following way
\begin{equation*}
  (i_1,j_1)\cdot(i_2,j_2)=
  \left\{
    \begin{array}{ll}
      (i_1-j_1+i_2,j_2), & \hbox{if~} j_1<i_2;\\
      (i_1,j_2),         & \hbox{if~} j_1=i_2;\\
      (i_1,j_1-i_2+j_2), & \hbox{if~} j_1>i_2.
    \end{array}
  \right.
\end{equation*}
It is well known that the semigroup $\boldsymbol{B}_{\omega}$ is isomorphic to the bicyclic monoid by the mapping $\mathfrak{h}\colon \mathscr{C}(p,q)\to \boldsymbol{B}_{\omega}$, $q^kp^l\mapsto (k,l)$ (see: \cite[Section~1.12]{Clifford-Preston-1961} or \cite[Exercise IV.1.11$(ii)$]{Petrich-1984}).

A {\it topological} (\emph{semitopological}) {\it semigroup} is a topological space together with a continuous (separately continuous) semigroup operation. If $S$ is a~semigroup and $\tau$ is a topology on $S$ such that $(S,\tau)$ is a topological semigroup, then we shall call $\tau$ a \emph{semigroup} \emph{topology} on $S$, and if $\tau$ is a topology on $S$ such that $(S,\tau)$ is a semitopological semigroup, then we shall call $\tau$ a \emph{shift-continuous} \emph{topology} on~$S$. An inverse topological semigroup with the continuous inversion is called a \emph{topological inverse semigroup}. If $S$ is an~inverse semigroup and $\tau$ is a topology on $S$ such that $(S,\tau)$ is a topological inverse semigroup, then we shall call $\tau$ a \emph{semigroup inverse topology} on $S$.

Next we shall describe the construction which is introduced in \cite{Gutik-Mykhalenych=2020}.

Let $\boldsymbol{B}_{\omega}$ be the bicyclic monoid and $\mathscr{F}$ be an ${\omega}$-closed subfamily of $\mathscr{P}(\omega)$. On the set $\boldsymbol{B}_{\omega}\times\mathscr{F}$ we define the semigroup operation ``$\cdot$'' in the following way
\begin{equation*}
  (i_1,j_1,F_1)\cdot(i_2,j_2,F_2)=
  \left\{
    \begin{array}{ll}
      (i_1-j_1+i_2,j_2,(j_1-i_2+F_1)\cap F_2), & \hbox{if~} j_1<i_2;\\
      (i_1,j_2,F_1\cap F_2),                   & \hbox{if~} j_1=i_2;\\
      (i_1,j_1-i_2+j_2,F_1\cap (i_2-j_1+F_2)), & \hbox{if~} j_1>i_2.
    \end{array}
  \right.
\end{equation*}
By \cite{Gutik-Mykhalenych=2020}, if the family $\mathscr{F}\subseteq\mathscr{P}(\omega)$ is ${\omega}$-closed, then $(\boldsymbol{B}_{\omega}\times\mathscr{F},\cdot)$ is a semigroup. Moreover, if an ${\omega}$-closed family  $\mathscr{F}\subseteq\mathscr{P}(\omega)$ contains the empty set $\varnothing$, then the set
\begin{equation*}
  \boldsymbol{I}=\{(i,j,\varnothing)\colon i,j\in\omega\}
\end{equation*}
is an ideal of the semigroup $(\boldsymbol{B}_{\omega}\times\mathscr{F},\cdot)$. For any ${\omega}$-closed family $\mathscr{F}\subseteq\mathscr{P}(\omega)$ the following semigroup
\begin{equation*}
  \boldsymbol{B}_{\omega}^{\mathscr{F}}=
\left\{
  \begin{array}{ll}
    (\boldsymbol{B}_{\omega}\times\mathscr{F},\cdot)/\boldsymbol{I}, & \hbox{if~} \varnothing\in\mathscr{F};\\
    (\boldsymbol{B}_{\omega}\times\mathscr{F},\cdot), & \hbox{if~} \varnothing\notin\mathscr{F}
  \end{array}
\right.
\end{equation*}
is defined in \cite{Gutik-Mykhalenych=2020}. The semigroup $\boldsymbol{B}_{\omega}^{\mathscr{F}}$ generalizes the bicyclic monoid and the countable semigroup of matrix units. It is proven in \cite{Gutik-Mykhalenych=2020} that $\boldsymbol{B}_{\omega}^{\mathscr{F}}$ is combinatorial inverse semigroup and Green's relations, the natural partial order on $\boldsymbol{B}_{\omega}^{\mathscr{F}}$ and its set of idempotents are described. The criteria of simplicity, $0$-simplicity, bisimplicity, $0$-bisimplicity of the semigroup $\boldsymbol{B}_{\omega}^{\mathscr{F}}$ and when $\boldsymbol{B}_{\omega}^{\mathscr{F}}$ has the identity, is isomorphic to the bicyclic semigroup or the countable semigroup of matrix units are given. In particular in \cite{Gutik-Mykhalenych=2020} it is proved that the semigroup $\boldsymbol{B}_{\omega}^{\mathscr{F}}$ is isomorphic to the semigrpoup of ${\omega}{\times}{\omega}$-matrix units if and only if $\mathscr{F}$ consists of sets of cardinality $\leqslant 1$ in $\omega$.

Let $\mathscr{F}$ be some family of cardinality $\leqslant 1$ in $\omega$. In this case we shall say that $\mathscr{F}$ is the \emph{family of atomic subsets} of $\omega$. It is obvious that if $\mathscr{F}=\{\varnothing\}$ then the semigroup $\boldsymbol{B}_{\omega}^{\mathscr{F}}$ is trivial and hence in this paper we assume that the family $\mathscr{F}$ contains at least one  singleton subset of $\omega$. It is obvious that in this case $\mathscr{F}$ is an ${\omega}$-closed subfamily of $\mathscr{P}(\omega)$ and hence $\boldsymbol{B}_{\omega}^{\mathscr{F}}$ is an inverse semigroup with zero. Later by $\boldsymbol{0}$ we denote the zero of $\boldsymbol{B}_{\omega}^{\mathscr{F}}$ and by $(i,j,\{k\})$ a non-zero element of $\boldsymbol{B}_{\omega}^{\mathscr{F}}$ for some $i,j\in\omega$, $\{k\}\in \mathscr{F}$.

We put $\boldsymbol{F}=\displaystyle\bigcup\mathscr{F}$. Since the semilattice $(\omega,\min)$ is linearly ordered, the set $\boldsymbol{F}$ with the binary operation $xy=\min\{x,y\}$ is a subsemilattice of $(\omega,\min)$ and later by $\boldsymbol{F}_{\min}$ we shall denote the set $\boldsymbol{F}$ with the  semilattice operation inherited  from $(\omega,\min)$.

We need the following construction from \cite{Gutik=1999}.

Let $S$ be a semigroup with zero and $\lambda\geqslant 1$ be a cardinal. On the set
$B_{\lambda}(S)=\left(\lambda\times S\times\lambda\right)\sqcup\{ \mathscr{O}\}$ we define a  semigroup operation as follows
\begin{equation*}
 (\alpha,s,\beta)\cdot(\gamma, t, \delta)=
\left\{
  \begin{array}{cl}
    (\alpha, st, \delta), & \hbox{if~} \beta=\gamma; \\
    \mathscr{O},          & \hbox{if~} \beta\ne \gamma
  \end{array}
\right.
\end{equation*}
and
\begin{equation*}
(\alpha, s, \beta)\cdot \mathscr{O}=\mathscr{O}\cdot(\alpha, s, \beta)=\mathscr{O}\cdot \mathscr{O}=\mathscr{O},
\end{equation*}
for all $\alpha, \beta, \gamma, \delta\in \lambda$ and $s, t\in S$. The semigroup $\mathscr{B}_\lambda(S)$ is called the {\it Brandt $\lambda$-extension of the semigroup} $S$~\cite{Gutik=1999}. Algebraic properties of $\mathscr{B}_\lambda(S)$ and its generalization the Brandt $\lambda^0$-extension  $\mathscr{B}^0_\lambda(S)$ are studied in \cite{Gutik=1999, Gutik=2016, Gutik-Pavlyk=2006, Gutik-Repovs=2010}.

In this paper we study the semigroup $\boldsymbol{B}_{\omega}^{\mathscr{F}}$ for a family $\mathscr{F}$ of atomic subsets of $\omega$. We show that $\boldsymbol{B}_{\omega}^{\mathscr{F}}$ is isomorphic to the  subsemigroup $\mathscr{B}_{\omega}^{\Rsh}(\boldsymbol{F}_{\min})$ of the Brandt $\omega$-extension of the semilattice $\boldsymbol{F}_{\min}$ and describe all shift-continuous feebly compact $T_1$-topologies on the semigroup $\mathscr{B}_{\omega}^{\Rsh}(\boldsymbol{F}_{\min})$. In particular, we prove that every shift-continuous feebly compact $T_1$-topology $\tau$ on  $\mathscr{B}_{\omega}^{\Rsh}(\boldsymbol{F}_{\min})$ is compact and moreover in this case  the space $(\mathscr{B}_{\omega}^{\Rsh}(\boldsymbol{F}_{\min}),\tau)$ is homeomorphic to the one-point Alexandroff compactification of the discrete countable space $\mathfrak{D}(\omega)$. We study the closure of $\boldsymbol{B}_{\omega}^{\mathscr{F}}$ in a semitopological semigroup. In particularly we show that $\boldsymbol{B}_{\omega}^{\mathscr{F}}$ is algebraically complete in the class of Hausdorff semitopological inverse semigroups with continuous inversion, and a Hausdorff topological inverse semigroup $\boldsymbol{B}_{\omega}^{\mathscr{F}}$ is closed in any Hausdorff topological semigroup if and only if the band $E(\boldsymbol{B}_{\omega}^{\mathscr{F}})$ is compact.

Later in this paper we  assume that $\mathscr{F}$ is a non-trivial family of atomic subsets of $\omega$, i.e., $\mathscr{F}$ contains at least one nontrivial singleton subset of $\omega$.

\section{\textbf{Algebraic properties of the semigroup $\boldsymbol{B}_{\omega}^{\mathscr{F}}$}}

Proposition 2 of \cite{Gutik-Mykhalenych=2020} implies the following proposition  which describing the natural partial order on $\boldsymbol{B}_{\omega}^{\mathscr{F}}$.

\begin{proposition}\label{proposition-2.1}
Let $(i_1,j_1,\{k_1\})$ and $(i_2,j_2,\{k_2\})$ be non-zero elements of the semigroup $\boldsymbol{B}_{\omega}^{\mathscr{F}}$. Then $(i_1,j_1,\{k_1\})\preccurlyeq(i_2,j_2,\{k_2\})$ if and only if
\begin{equation*}
k_2-k_1=i_1-i_2=j_1-j_2=p
\end{equation*}
for some $p\in\omega$.
\end{proposition}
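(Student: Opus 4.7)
My plan is to obtain Proposition~\ref{proposition-2.1} as a direct specialization of Proposition~2 of \cite{Gutik-Mykhalenych=2020}, as the authors themselves announce. That cited proposition describes the natural partial order on $\boldsymbol{B}_\omega^{\mathscr{F}}$ for an arbitrary $\omega$-closed family: two non-zero elements $(i_1,j_1,F_1)$ and $(i_2,j_2,F_2)$ satisfy $(i_1,j_1,F_1)\preccurlyeq(i_2,j_2,F_2)$ exactly when there is some $p\in\omega$ with $i_1=i_2+p$, $j_1=j_2+p$, and $F_1\subseteq -p+F_2$. So the first step is simply to recall this characterization.

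Next, I would substitute $F_1=\{k_1\}$ and $F_2=\{k_2\}$ and observe that the set-theoretic inclusion $\{k_1\}\subseteq -p+\{k_2\}=\{k_2-p\}$ collapses to the scalar equation $k_1=k_2-p$, equivalently $k_2-k_1=p$. Together with $i_1-i_2=j_1-j_2=p$ this yields the three displayed equalities. The converse is obtained by reversing the argument: given $p\in\omega$ with $k_2-k_1=i_1-i_2=j_1-j_2=p$, one has $\{k_1\}=\{k_2-p\}\subseteq -p+\{k_2\}$ automatically, so Proposition~2 of \cite{Gutik-Mykhalenych=2020} supplies $\preccurlyeq$.

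The only substantive point is this degeneration of a set inclusion to an equation of integers, which is immediate because singletons admit no proper non-empty subset; so no real obstacle is anticipated. If one prefers a self-contained verification in place of citing Proposition~2 of \cite{Gutik-Mykhalenych=2020}, I would instead check that $(j_1,j_1,\{k_1\})\in E(\boldsymbol{B}_{\omega}^{\mathscr{F}})$ and compute, using the multiplication rule in the case $j_2\leqslant j_1$, that
\begin{equation*}
(i_2,j_2,\{k_2\})\cdot(j_1,j_1,\{k_1\})=\bigl(i_2-j_2+j_1,\,j_1,\,(j_2-j_1+\{k_2\})\cap\{k_1\}\bigr)=(i_1,j_1,\{k_1\})
\end{equation*}
precisely under the stated equalities, thereby exhibiting the idempotent $e$ witnessing the relation $s=te$ required by the definition of the natural partial order; the reverse implication then uses that any idempotent $e=(m,m,\{k\})$ of $\boldsymbol{B}_{\omega}^{\mathscr{F}}$ with $(i_2,j_2,\{k_2\})\cdot e=(i_1,j_1,\{k_1\})$ forces, via the same multiplication rule, both $i_1-i_2=j_1-j_2$ and the singleton equality that pins down $p=k_2-k_1$.
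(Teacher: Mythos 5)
Your proposal is correct and matches the paper's own treatment: the paper likewise obtains Proposition~\ref{proposition-2.1} as an immediate specialization of Proposition~2 of \cite{Gutik-Mykhalenych=2020}, the only content being that the inclusion $\{k_1\}\subseteq -p+\{k_2\}$ degenerates to $k_2-k_1=p$. Your optional self-contained computation with the idempotent $(j_1,j_1,\{k_1\})$ is also accurate and consistent with the multiplication rule.
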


Since the set $\omega$ is well ordered by the usual order  we enumerate the set $\boldsymbol{F}=\{k_i\colon i\in \omega\}$ in the following way $k_0<k_1<\cdots<k_n<k_{n+1}<\cdots$. It is obvious that the set $\boldsymbol{F}$ is finite if and only if $\boldsymbol{F}$ contains the maximum.

Proposition \ref{proposition-2.1} implies the structure of maximal chains in $\boldsymbol{B}_{\omega}^{\mathscr{F}}$ with the respect to its natural partial order

\begin{corollary}\label{corollary-2.2}
Let $i,j$ be arbitrary elements of $\omega$. Then in the case when the set $\boldsymbol{F}$ is infinite then the following finite series
\begin{align*}
  \boldsymbol{0}&\preccurlyeq(i,j,\{k_0\}); \\
  \boldsymbol{0}&\preccurlyeq(i+k_1-k_0,j+k_1-k_0,\{k_0\})\preccurlyeq(i,j,\{k_1\});\\
  \boldsymbol{0}&\preccurlyeq(i+k_1-k_0,j+k_1-k_0,\{k_0\})\preccurlyeq(i+k_2-k_1,j+k_2-k_1,\{k_1\})\preccurlyeq(i,j,\{k_2\});\\
                & \qquad \cdots\qquad  \cdots\qquad \cdots\qquad \cdots \qquad \cdots \qquad \cdots\\
  \boldsymbol{0}&\preccurlyeq(i+k_1-k_0,j+k_1-k_0,\{k_0\})\preccurlyeq(i+k_2-k_1,j+k_2-k_1,\{k_1\})\preccurlyeq\cdots\preccurlyeq\\
                &\quad\preccurlyeq(i+k_{n+1}-k_{n},j+k_{n+1}-k_{n},\{k_{n}\})\preccurlyeq(i,j,\{k_{n+1}\});\\
                & \qquad \cdots\qquad  \cdots\qquad \cdots\qquad \cdots \qquad \cdots \qquad \cdots\qquad \cdots \qquad \cdots
\end{align*}
describes maximal chains in the semigroup $\boldsymbol{B}_{\omega}^{\mathscr{F}}$ and in the case when the set $\boldsymbol{F}$ is finite and contains maximum $k_n$ then the following finite series
\begin{align*}
  \boldsymbol{0}&\preccurlyeq(i,j,\{k_0\}); \\
  \boldsymbol{0}&\preccurlyeq(i+k_1-k_0,j+k_1-k_0,\{k_0\})\preccurlyeq(i,j,\{k_1\});\\
  \boldsymbol{0}&\preccurlyeq(i+k_1-k_0,j+k_1-k_0,\{k_0\})\preccurlyeq(i+k_2-k_1,j+k_2-k_1,\{k_1\})\preccurlyeq(i,j,\{k_2\});\\
                & \qquad \cdots\qquad  \cdots\qquad \cdots\qquad \cdots \qquad \cdots \qquad \cdots\\
  \boldsymbol{0}&\preccurlyeq(i+k_1-k_0,j+k_1-k_0,\{k_0\})\preccurlyeq(i+k_2-k_1,j+k_2-k_1,\{k_1\})\preccurlyeq\cdots\preccurlyeq\\
                &\quad\preccurlyeq(i+k_{n}-k_{n-1},j+k_{n}-k_{n-1},\{k_{n}\})\preccurlyeq(i,j,\{k_{n}\})
\end{align*}
describes maximal chains in the semigroup $\boldsymbol{B}_{\omega}^{\mathscr{F}}$.
\end{corollary}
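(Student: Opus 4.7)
The corollary is essentially a direct application of Proposition \ref{proposition-2.1}, so my plan is to compute the principal order ideal below a fixed non-zero element of $\boldsymbol{B}_{\omega}^{\mathscr{F}}$, then verify the resulting chains are saturated (no element can be inserted between consecutive terms), and finally distinguish the two cases according to whether $\boldsymbol{F}$ is finite or infinite.

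First I would fix $(i,j,\{k_n\}) \in \boldsymbol{B}_{\omega}^{\mathscr{F}} \setminus \{\boldsymbol{0}\}$ and determine the non-zero elements $(a,b,\{k\}) \preccurlyeq (i,j,\{k_n\})$. By Proposition \ref{proposition-2.1}, this holds iff $k_n - k = a - i = b - j = p$ for some $p \in \omega$. Since $k \in \boldsymbol{F} = \{k_0 < k_1 < \cdots\}$ with $k \leq k_n$, the index of $k$ ranges over $\{0,1,\dots,n\}$; writing $k = k_m$ forces $p = k_n - k_m$ and hence $a = i + k_n - k_m$, $b = j + k_n - k_m$. Thus the non-zero elements below $(i,j,\{k_n\})$ form exactly the set $\{(i + k_n - k_m,\, j + k_n - k_m,\, \{k_m\}) : 0 \leq m \leq n\}$.

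Next I would check that these elements, together with $\boldsymbol{0}$, form a saturated chain. That they form a chain is immediate: for $m_1 < m_2 \leq n$, a single application of Proposition \ref{proposition-2.1} with $p = k_{m_2} - k_{m_1}$ shows the element indexed by $k_{m_1}$ lies strictly below the element indexed by $k_{m_2}$. For saturation, suppose some $(a,b,\{k\})$ sits strictly between two consecutive terms indexed by $k_m$ and $k_{m+1}$; Proposition \ref{proposition-2.1} forces $k_m \leq k \leq k_{m+1}$, and since $k_m,k_{m+1}$ are consecutive in the enumeration of $\boldsymbol{F}$, we must have $k \in \{k_m, k_{m+1}\}$, which in turn pins $(a,b,\{k\})$ to the corresponding endpoint by another application of the same proposition.

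Finally I would split into the two cases. If $\boldsymbol{F}$ is finite with maximum $k_n$, then no element of $\boldsymbol{B}_{\omega}^{\mathscr{F}}$ lies strictly above $(i,j,\{k_n\})$ (any such element would have index $k' > k_n$, contradicting maximality), and hence each displayed chain is truly maximal in the whole semigroup. If $\boldsymbol{F}$ is infinite, then $(i,j,\{k_n\})$ always admits strictly larger elements, so the displayed chain is the unique saturated chain from $\boldsymbol{0}$ up to $(i,j,\{k_n\})$; letting $n$ range yields the complete list, with the infinite maximal chains obtained as their nested union. The main obstacle is purely notational: once the principal ideal of $(i,j,\{k_n\})$ has been correctly identified via Proposition \ref{proposition-2.1}, the chain structure, saturation, and maximality all fall out by direct index arithmetic.
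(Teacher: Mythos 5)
Your proposal is correct and takes essentially the same route as the paper, which offers no argument beyond asserting that the corollary follows from Proposition~\ref{proposition-2.1}: computing the principal ideal below $(i,j,\{k_n\})$, checking the resulting chain is saturated, and splitting on whether $\boldsymbol{F}$ has a maximum is exactly the natural way to fill that in. One remark: your (correct) formula $(i+k_n-k_m,\,j+k_n-k_m,\,\{k_m\})$ for the term at level $k_m$ does not literally match the increments $k_{m+1}-k_m$ displayed in the corollary; the displayed consecutive terms fail the criterion of Proposition~\ref{proposition-2.1} unless $k_{m+1}=k_{m+2}$, so the printed series appears to contain a typo and your version is the one that actually forms a chain.
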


We define a map $\mathfrak{f}\colon\boldsymbol{B}_{\omega}^{\mathscr{F}}\to\mathscr{B}_{\omega}(\boldsymbol{F}_{\min})$ by the formulae
\begin{equation}\label{eq-2.1}
  \mathfrak{f}(i,j,\{k\})=(i+k,k,j+k) \qquad \hbox{and} \qquad (\boldsymbol{0})\mathfrak{f}=\mathscr{O},
\end{equation}
for $i,j\in\omega$ and $\{k\}\in\mathscr{F}\setminus \{\varnothing\}$.

\begin{proposition}\label{proposition-2.3}
The map $\mathfrak{f}\colon\boldsymbol{B}_{\omega}^{\mathscr{F}}\to\mathscr{B}_{\omega}(\boldsymbol{F}_{\min})$ is an isomorphic embedding.
\end{proposition}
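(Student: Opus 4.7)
The plan is to verify three things: that $\mathfrak{f}$ is well-defined, that it is injective, and that it is a homomorphism. The first two are immediate; the third is the substantive work.

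\medskip

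\textbf{Well-definedness.} For any non-zero element $(i,j,\{k\}) \in \boldsymbol{B}_\omega^{\mathscr{F}}$, the triple $(i+k,k,j+k)$ lies in $\omega \times \boldsymbol{F}_{\min} \times \omega$ since $k \in \boldsymbol{F}$, and the zero $\boldsymbol{0}$ is sent to the external zero $\mathscr{O}$ of the Brandt extension by definition.

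\medskip

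\textbf{Injectivity.} If $\mathfrak{f}(i_1,j_1,\{k_1\}) = \mathfrak{f}(i_2,j_2,\{k_2\})$ then $(i_1+k_1,k_1,j_1+k_1) = (i_2+k_2,k_2,j_2+k_2)$; comparing the middle coordinates yields $k_1=k_2$, and then the outer coordinates give $i_1=i_2$ and $j_1=j_2$. Since only $\boldsymbol{0}$ is sent to $\mathscr{O}$, the map is injective.

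\medskip

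\textbf{Homomorphism property.} This is where I expect the main work, and it requires matching the two different semigroup laws. The key observation is that in $\boldsymbol{B}_\omega^{\mathscr{F}}$ the product $(i_1,j_1,\{k_1\}) \cdot (i_2,j_2,\{k_2\})$ is non-zero precisely when a singleton intersection like $(j_1-i_2+\{k_1\}) \cap \{k_2\}$ is non-empty, whereas in $\mathscr{B}_\omega(\boldsymbol{F}_{\min})$ the product $(\alpha_1,s_1,\beta_1)\cdot(\alpha_2,s_2,\beta_2)$ is non-zero precisely when $\beta_1 = \alpha_2$. The strategy is to split into the two cases of the defining formula and verify that the two non-vanishing conditions coincide and that the outputs agree.

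In the case $j_1 \leq i_2$, the intersection $(j_1-i_2+\{k_1\}) \cap \{k_2\}$ is non-empty iff $k_1+j_1 = k_2+i_2$, which is exactly the condition $j_1+k_1 = i_2+k_2$ needed for the Brandt product of $\mathfrak{f}(i_1,j_1,\{k_1\})=(i_1+k_1,k_1,j_1+k_1)$ and $\mathfrak{f}(i_2,j_2,\{k_2\})=(i_2+k_2,k_2,j_2+k_2)$ to be non-zero. Under this condition one has $k_2 \leq k_1$ (because $i_2 \geq j_1$), so the middle coordinate $\min\{k_1,k_2\}$ of the Brandt product equals $k_2$, matching the singleton $\{k_2\}$ produced by the $\boldsymbol{B}_\omega^{\mathscr{F}}$-product. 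Comparing outer coordinates reduces to $i_1+k_1 = i_1-j_1+i_2+k_2$, which is again the non-vanishing condition. The case $j_1 \geq i_2$ is symmetric, producing $k_1 \leq k_2$ and the matching outer equality $j_1-i_2+j_2+k_1 = j_2+k_2$. Products involving $\boldsymbol{0}$ are handled trivially since $\mathfrak{f}(\boldsymbol{0})=\mathscr{O}$ acts as the absorbing element on both sides.

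\medskip

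The main obstacle will be keeping the bookkeeping clean across the case split: one must confirm simultaneously that (i) the ``intersection non-empty'' condition in $\boldsymbol{B}_\omega^{\mathscr{F}}$ translates exactly to the ``middle indices equal'' condition in the Brandt extension, (ii) the $\min$-operation in $\boldsymbol{F}_{\min}$ picks out the correct surviving $k$-coordinate, and (iii) the shifted outer coordinates match. All three are forced by the same linear relation $k_1+j_1 = k_2+i_2$ (or its symmetric counterpart), which is the heart of the verification.
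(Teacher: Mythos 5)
Your proposal is correct and follows essentially the same route as the paper: the paper likewise dismisses injectivity as obvious and then verifies the homomorphism property by a direct case analysis in which the non-vanishing condition $j_1+k_1=i_2+k_2$ is matched on both sides and the surviving coordinate is checked to be $\min\{k_1,k_2\}$. Your observation that the single linear relation $k_1+j_1=k_2+i_2$ simultaneously forces all three coincidences is exactly what the paper's chains of displayed equalities encode.
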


\begin{proof}[\textsl{Proof}]
It is obvious that the map  $\mathfrak{f}$ which is defined by formulae \eqref{eq-2.1} is injective.

For  arbitrary $(i_1,j_1,\{k_1\}),(i_2,j_2,\{k_2\})\in \boldsymbol{B}_{\omega}^{\mathscr{F}}$  we have that
\begin{align*}
  \mathfrak{f}((i_1,j_1,&\{k_1\})\cdot(i_2,j_2,\{k_2\}))=\\
&=\left\{
  \begin{array}{cl}
    \mathfrak{f}(i_1-j_1+i_2,j_2,\{k_2\}), & \hbox{if~} j_1<i_2 \hbox{~and~} j_1+k_1=i_2+k_2;\\
    \mathfrak{f}(i_1,j_2,\{k_1\}),         & \hbox{if~} j_1=i_2 \hbox{~and~} k_1=k_2;\\
    \mathfrak{f}(i_1,j_1-i_2+j_2,\{k_1\}), & \hbox{if~} j_1>i_2 \hbox{~and~} j_1+k_1=i_2+k_2;\\
    \mathfrak{f}(\boldsymbol{0}),          & \hbox{if~} j_1+k_1\neq i_2+k_2
  \end{array}
\right.=\\
    & =\left\{
  \begin{array}{cl}
    (i_1-j_1+i_2+k_2,k_2,j_2+k_2), & \hbox{if~} j_1<i_2 \hbox{~and~} j_1+k_1=i_2+k_2;\\
    (i_1+k_1,k_1,j_2+k_1),         & \hbox{if~} j_1=i_2 \hbox{~and~} k_1=k_2;\\
    (i_1+k_1,k_1,j_1-i_2+j_2+k_1), & \hbox{if~} j_1>i_2 \hbox{~and~} j_1+k_1=i_2+k_2;\\
    \mathscr{O},                   & \hbox{if~} j_1+k_1\neq i_2+k_2
  \end{array}
\right.=\\
    & =\left\{
  \begin{array}{cl}
    (i_1+k_1,k_2,j_2+k_2), & \hbox{if~} j_1<i_2 \hbox{~and~} j_1+k_1=i_2+k_2;\\
    (i_1+k_1,k_1,j_2+k_2), & \hbox{if~} j_1=i_2 \hbox{~and~} k_1=k_2;\\
    (i_1+k_1,k_1,j_2+k_2), & \hbox{if~} j_1>i_2 \hbox{~and~} j_1+k_1=i_2+k_2;\\
    \mathscr{O},           & \hbox{if~} j_1+k_1\neq i_2+k_2,
  \end{array}
\right.
\end{align*}
and
\begin{align*}
  \mathfrak{f}((i_1,j_1,\{k_1\})&\cdot\mathfrak{f}(i_2,j_2,\{k_2\}))=(i_1+k_1,k_1,j_1+k_1)\cdot(i_2+k_2,k_2,j_2+k_2)=\\
&=
\left\{
  \begin{array}{cl}
    (i_1+k_1,\min\{k_1,k_2\},j_2+k_2), & \hbox{if~} j_1+k_1=i_2+k_2; \\
    \mathscr{O},                       & \hbox{if~} j_1+k_1\neq i_2+k_2
  \end{array}
\right.=\\
    & =\left\{
  \begin{array}{cl}
    (i_1+k_1,k_2,j_2+k_2), & \hbox{if~} k_2<k_1 \hbox{~and~} j_1+k_1=i_2+k_2;\\
    (i_1+k_1,k_1,j_2+k_2), & \hbox{if~} k_2=k_1 \hbox{~and~} k_1=k_2;\\
    (i_1+k_1,k_1,j_2+k_2), & \hbox{if~} k_2>k_1 \hbox{~and~} j_1+k_1=i_2+k_2;\\
    \mathscr{O},           & \hbox{if~} j_1+k_1\neq i_2+k_2,
  \end{array}
\right.=\\
    & =\left\{
  \begin{array}{cl}
    (i_1+k_1,k_2,j_2+k_2), & \hbox{if~} j_1<i_2 \hbox{~and~} j_1+k_1=i_2+k_2;\\
    (i_1+k_1,k_1,j_2+k_2), & \hbox{if~} j_1=i_2 \hbox{~and~} k_1=k_2;\\
    (i_1+k_1,k_1,j_2+k_2), & \hbox{if~} j_1>i_2 \hbox{~and~} j_1+k_1=i_2+k_2;\\
    \mathscr{O},           & \hbox{if~} j_1+k_1\neq i_2+k_2.
  \end{array}
\right.
\end{align*}
Since $\boldsymbol{0}$ and $\mathscr{O}$ are the zeros of the semigroups $\boldsymbol{B}_{\omega}^{\mathscr{F}}$ and $\mathscr{B}_{\omega}(\boldsymbol{F}_{\min})$, respectively, the above equalities imply that the map $\mathfrak{f}\colon\boldsymbol{B}_{\omega}^{\mathscr{F}}\to\mathscr{B}_{\omega}(\boldsymbol{F}_{\min})$ is a homomorphism. This completes the proof of the proposition.
\end{proof}

Next we define
\begin{equation*}
  \mathscr{B}_{\omega}^{\Rsh}(\boldsymbol{F}_{\min})=\left\{\mathscr{O}\right\}\cup \left\{(i+k,k,j+k)\in\mathscr{B}_{\omega}(\boldsymbol{F}_{\min})\setminus\left\{\mathscr{O}\right\}\colon (i,j,\{k\})\in \boldsymbol{B}_{\omega}^{\mathscr{F}}\right\}.
\end{equation*}

Proposition \ref{proposition-2.3} implies

\begin{theorem}\label{theorem-2.4}
Let $\mathscr{F}^*$ be any family of atomic subsets of $\omega$.
Then the semigroup  $\boldsymbol{B}_{\omega}^{\mathscr{F}}$  is isomorphic to $\mathscr{B}_{\omega}^{\Rsh}(\boldsymbol{F}_{\min})$ by the mapping $\mathfrak{f}$.
\end{theorem}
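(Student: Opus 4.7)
The plan is to read off this theorem as a direct consequence of Proposition \ref{proposition-2.3} together with the definition of $\mathscr{B}_{\omega}^{\Rsh}(\boldsymbol{F}_{\min})$. Proposition \ref{proposition-2.3} already supplies that $\mathfrak{f}\colon\boldsymbol{B}_{\omega}^{\mathscr{F}}\to\mathscr{B}_{\omega}(\boldsymbol{F}_{\min})$ is an injective homomorphism, so the only remaining task is to identify the image of $\mathfrak{f}$ with $\mathscr{B}_{\omega}^{\Rsh}(\boldsymbol{F}_{\min})$ and to observe that this image is a subsemigroup of $\mathscr{B}_{\omega}(\boldsymbol{F}_{\min})$.

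First I would unpack the defining formula of $\mathscr{B}_{\omega}^{\Rsh}(\boldsymbol{F}_{\min})$: a non-zero triple $(a,k,b)\in\mathscr{B}_{\omega}(\boldsymbol{F}_{\min})$ belongs to $\mathscr{B}_{\omega}^{\Rsh}(\boldsymbol{F}_{\min})$ precisely when $a=i+k$ and $b=j+k$ for some $i,j\in\omega$ with $\{k\}\in\mathscr{F}$, and $\mathscr{O}\in\mathscr{B}_{\omega}^{\Rsh}(\boldsymbol{F}_{\min})$ with $\mathfrak{f}(\boldsymbol{0})=\mathscr{O}$. Consequently $\mathscr{B}_{\omega}^{\Rsh}(\boldsymbol{F}_{\min})$ is, by construction, exactly the set-theoretic image $\mathfrak{f}\bigl(\boldsymbol{B}_{\omega}^{\mathscr{F}}\bigr)$, so $\mathfrak{f}$, regarded as a map onto this set, is surjective. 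Since the homomorphic image of a semigroup under a semigroup homomorphism is automatically a subsemigroup, $\mathscr{B}_{\omega}^{\Rsh}(\boldsymbol{F}_{\min})$ is a subsemigroup of $\mathscr{B}_{\omega}(\boldsymbol{F}_{\min})$; combined with the injectivity and the homomorphism property given by Proposition \ref{proposition-2.3}, the corestriction $\mathfrak{f}\colon\boldsymbol{B}_{\omega}^{\mathscr{F}}\to\mathscr{B}_{\omega}^{\Rsh}(\boldsymbol{F}_{\min})$ is a bijective homomorphism, and hence a semigroup isomorphism.

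There is essentially no genuine obstacle; the theorem is a bookkeeping restatement of Proposition \ref{proposition-2.3} under the notation $\mathscr{B}_{\omega}^{\Rsh}(\boldsymbol{F}_{\min})$, which is precisely the image. The only point I would flag explicitly is that $\mathfrak{f}$ never sends a non-zero element of $\boldsymbol{B}_{\omega}^{\mathscr{F}}$ to $\mathscr{O}$ (so the decomposition $\mathscr{B}_{\omega}^{\Rsh}(\boldsymbol{F}_{\min})=\{\mathscr{O}\}\sqcup\mathfrak{f}\bigl(\boldsymbol{B}_{\omega}^{\mathscr{F}}\setminus\{\boldsymbol{0}\}\bigr)$ is disjoint), but this is immediate from the formula $\mathfrak{f}(i,j,\{k\})=(i+k,k,j+k)\in\omega\times\boldsymbol{F}_{\min}\times\omega$.
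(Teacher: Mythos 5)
Your proposal is correct and matches the paper's approach: the paper derives Theorem~\ref{theorem-2.4} directly from Proposition~\ref{proposition-2.3}, since $\mathscr{B}_{\omega}^{\Rsh}(\boldsymbol{F}_{\min})$ is defined to be exactly the image of $\mathfrak{f}$ together with $\mathscr{O}$. Your explicit remarks about surjectivity onto the image and the disjointness of $\{\mathscr{O}\}$ from the image of the non-zero elements are just the bookkeeping the paper leaves implicit.
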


\begin{proposition}\label{proposition-2.5}
Let $\mathscr{F}^*$ be any family of subsets of $\omega$ which contains a non-empty set, and $k_0=\min \displaystyle\bigcup\mathscr{F}^*$. Then the semigroup  $\boldsymbol{B}_{\omega}^{\mathscr{F}^*}$  is isomorphic to the semigroup $\boldsymbol{B}_{\omega}^{\mathscr{F}_0^*}$ where
\begin{equation*}
  \mathscr{F}_0^*=\left\{-k_0+F\colon F\in \mathscr{F}^*\right\}.
\end{equation*}
\end{proposition}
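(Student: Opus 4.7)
Since $k_0 = \min \bigcup \mathscr{F}^*$, every element $k$ of a non-empty $F \in \mathscr{F}^*$ satisfies $k \geq k_0$, so $-k_0 + F \subseteq \omega$ and the family $\mathscr{F}_0^*$ is a well-defined subfamily of $\mathscr{P}(\omega)$. First I would check that $\mathscr{F}_0^*$ is $\omega$-closed whenever $\mathscr{F}^*$ is, using the identity
$$(-k_0 + F_1) \cap \bigl(-n + (-k_0 + F_2)\bigr) = -k_0 + \bigl(F_1 \cap (-n + F_2)\bigr),$$
so the right-hand side belongs to $\mathscr{F}_0^*$ by the $\omega$-closedness of $\mathscr{F}^*$. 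This ensures that $\boldsymbol{B}_{\omega}^{\mathscr{F}_0^*}$ is a well-defined semigroup in the sense of \cite{Gutik-Mykhalenych=2020}.

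Next I would define a candidate isomorphism $\varphi \colon \boldsymbol{B}_{\omega}^{\mathscr{F}^*} \to \boldsymbol{B}_{\omega}^{\mathscr{F}_0^*}$ by the formula $\varphi(i,j,F) = (i,j,-k_0+F)$ on the non-zero elements, sending $\boldsymbol{0}$ to $\boldsymbol{0}$. Since $-k_0 + \varnothing = \varnothing$, the map is compatible with the Rees quotient by the ideal of triples whose third coordinate is empty (when $\varnothing \in \mathscr{F}^*$), and bijectivity is immediate from the obvious inverse $(i,j,G) \mapsto (i,j,k_0+G)$.

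The bulk of the work is verifying that $\varphi$ is a homomorphism. Since the first two coordinates of $(i_1,j_1,F_1) \cdot (i_2,j_2,F_2)$ depend only on $i_1,j_1,i_2,j_2$, they automatically agree on both sides, and the content is in the third coordinate. In the case $j_1 \leq i_2$ one has to verify
$$-k_0 + \bigl((j_1-i_2+F_1) \cap F_2\bigr) = \bigl(j_1-i_2+(-k_0+F_1)\bigr) \cap (-k_0+F_2),$$
which reduces to the elementary fact that translation by an integer distributes over intersection, namely $(m+A) \cap (m+B) = m + (A \cap B)$. The case $j_1 \geq i_2$ is entirely analogous.

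I do not expect any serious obstacle here; the proof is of the same flavour as that of Proposition~\ref{proposition-2.3}, and the argument is essentially bookkeeping. The only point that requires a momentary check is the interaction with the zero: if the third coordinate of a product happens to be $\varnothing$, both sides of the homomorphism equation collapse to the respective zeros of $\boldsymbol{B}_{\omega}^{\mathscr{F}^*}$ and $\boldsymbol{B}_{\omega}^{\mathscr{F}_0^*}$, which is consistent with the convention $\varphi(\boldsymbol{0}) = \boldsymbol{0}$.
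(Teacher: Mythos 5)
Your proof is correct, and it follows the same overall strategy as the paper (an explicit translation map, verified to be an isomorphism by direct computation), but the map itself is different and the difference is worth noting. The paper's map \eqref{eq-2.2} translates \emph{all three} coordinates, $\mathfrak{h}(i,j,\{k\})=(i-k_0,j-k_0,\{k-k_0\})$, whereas yours, $\varphi(i,j,F)=(i,j,-k_0+F)$, translates only the third. Since $i$ and $j$ range over all of $\omega$ while only the elements of the sets $F\in\mathscr{F}^*$ are bounded below by $k_0$, the paper's formula produces negative first and second coordinates whenever $i<k_0$ or $j<k_0$, so as literally written it does not map into $\boldsymbol{B}_{\omega}^{\mathscr{F}_0^*}$; your version is the one that is actually well defined (and the paper's case-by-case computation goes through verbatim for it, since the inequalities $j_1-k_0\lessgtr i_2-k_0$ are equivalent to $j_1\lessgtr i_2$). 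Two further points in your favour: you verify that $\mathscr{F}_0^*$ is $\omega$-closed via $(-k_0+F_1)\cap\bigl(-n+(-k_0+F_2)\bigr)=-k_0+\bigl(F_1\cap(-n+F_2)\bigr)$, a step the paper needs but does not record, and you argue with arbitrary sets $F$ rather than singletons $\{k\}$, which matches the generality in which Proposition~\ref{proposition-2.5} is stated (the paper's computation is written only for atomic families). Both arguments ultimately rest on the same elementary fact that translation by an integer is a bijection of $\mathbb{Z}$ and hence commutes with intersections.
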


\begin{proof}[\textsl{Proof}]
Since the set $\omega$ with the usual order $\leqslant$ is well ordered, the number $k_0$ is well defined. This implies that the semigroup $\boldsymbol{B}_{\omega}^{\mathscr{F}_0^*}$ is well defined, because $F\subseteq\{n\in\omega\colon n\geqslant k_0 \}$ for any $F\in\mathscr{F}^*$. Without loss of generality we may assume that $\varnothing\in \mathscr{F}^*$, which implies that the semigroup $\boldsymbol{B}_{\omega}^{\mathscr{F}^*}$ has zero $\boldsymbol{0}$, and hence the semigroup $\boldsymbol{B}_{\omega}^{\mathscr{F}_0^*}$ has zero $\boldsymbol{0}$, too.

We define the map $\mathfrak{h}\colon \boldsymbol{B}_{\omega}^{\mathscr{F}^*}\to \boldsymbol{B}_{\omega}^{\mathscr{F}_0^*}$ in the following way
\begin{equation}\label{eq-2.2}
  \mathfrak{h}(i,j,\{k\})=(i-k_0,j-k_0,\{k-k_0\}) \qquad \hbox{and} \qquad (\boldsymbol{0})\mathfrak{h}=\boldsymbol{0}
\end{equation}
for $i,j\in\omega$ and $\{k\}\in\mathscr{F}^*\setminus \{\varnothing\}$. It is obvious that such defined map  $\mathfrak{h}$ is bijective.

For arbitrary $(i_1,j_1,\{k_1\}),(i_2,j_2,\{k_2\})\in \boldsymbol{B}_{\omega}^{\mathscr{F}^*}$  we have that
\begin{align*}
  \mathfrak{h}((i_1,&j_1,\{k_1\})\cdot(i_2,j_2,\{k_2\}))=\\
&=\left\{
  \begin{array}{cl}
    \mathfrak{h}(i_1-j_1+i_2,j_2,\{k_2\}), & \hbox{if~} j_1<i_2 \hbox{~and~} j_1+k_1=i_2+k_2;\\
    \mathfrak{h}(i_1,j_2,\{k_1\}),         & \hbox{if~} j_1=i_2 \hbox{~and~} k_1=k_2;\\
    \mathfrak{h}(i_1,j_1-i_2+j_2,\{k_1\}), & \hbox{if~} j_1>i_2 \hbox{~and~} j_1+k_1=i_2+k_2;\\
    \mathfrak{h}(\boldsymbol{0}),          & \hbox{if~} j_1+k_1\neq i_2+k_2
  \end{array}
\right.=\\
    & =\left\{
  \begin{array}{cl}
    (i_1-j_1+i_2-k_0,j_2-k_0,\{k_2-k_0\}), & \hbox{if~} j_1<i_2 \hbox{~and~} j_1+k_1=i_2+k_2;\\
    (i_1-k_0,j_2-k_0,\{k_1-k_0\}),         & \hbox{if~} j_1=i_2 \hbox{~and~} k_1=k_2;\\
    (i_1-k_0,j_1-i_2+j_2-k_0,\{k_1-k_0\}), & \hbox{if~} j_1>i_2 \hbox{~and~} j_1+k_1=i_2+k_2;\\
    \boldsymbol{0},                        & \hbox{if~} j_1+k_1\neq i_2+k_2
  \end{array}
\right.
\end{align*}
and
\begin{align*}
  &\mathfrak{h}(i_1,j_1,\{k_1\})\cdot\mathfrak{h}(i_2,j_2,\{k_2\})=\\
  &=
(i_1-k_0,j_1-k_0,\{k_1-k_0\})\cdot(i_2-k_0,j_2-k_0,\{k_2-k_0\})=\\
&=\left\{
  \begin{array}{cl}
    (i_1{-}k_0{-}(j_1{-}k_0){+}i_2{-}k_0,j_2{-}k_0,\{k_2{-}k_0\}), & \hbox{if~} j_1-k_0<i_2-k_0 \hbox{~and~}\\
                                                     & \quad j_1{-}k_0{+}k_1{-}k_0{=}i_2{-}k_0{+}k_2{-}k_0;\\
    (i_1-k_0,j_2-k_0,\{k_1-k_0\}),                   & \hbox{if~} j_1-k_0=i_2-k_0 \hbox{~and~}\\
                                                     & \qquad k_1-k_0=k_2-k_0;\\
    (i_1{-}k_0,j_1{-}k_0{-}(i_2{-}k_0){+}j_2{-}k_0,\{k_1{-}k_0\}), & \hbox{if~} j_1-k_0>i_2-k_0 \hbox{~and~}\\
                                                     & \quad  j_1{-}k_0{+}k_1{-}k_0{=}i_2{-}k_0{+}k_2{-}k_0;\\
    \boldsymbol{0},                                  & \hbox{if~} j_1{-}k_0{+}k_1{-}k_0{\neq} i_2{-}k_0{+}k_2{-}k_0
  \end{array}
\right.\\
&=\left\{
  \begin{array}{cl}
    (i_1-j_1+i_2-k_0,j_2-k_0,\{k_2-k_0\}), & \hbox{if~} j_1<i_2 \hbox{~and~} j_1+k_1=i_2+k_2;\\
    (i_1-k_0,j_2-k_0,\{k_1-k_0\}),         & \hbox{if~} j_1=i_2 \hbox{~and~} k_1=k_2;\\
    (i_1-k_0,j_1-i_2+j_2-k_0,\{k_1-k_0\}), & \hbox{if~} j_1>i_2 \hbox{~and~} j_1+k_1=i_2+k_2;\\
    \boldsymbol{0},                                  & \hbox{if~} j_1+k_1\neq i_2+k_2.
  \end{array}
\right.
\end{align*}
Since $\boldsymbol{0}$ is the zero of both semigroups $\boldsymbol{B}_{\omega}^{\mathscr{F}^*}$ and $\boldsymbol{B}_{\omega}^{\mathscr{F}_0^*}$, the above equalities imply that such defined  map $\mathfrak{h}\colon \boldsymbol{B}_{\omega}^{\mathscr{F}^*}\to \boldsymbol{B}_{\omega}^{\mathscr{F}_0^*}$ is a homomorphism. 
\end{proof}

\begin{theorem}\label{theorem-2.6}
Let $\mathscr{F}^1$ and $\mathscr{F}^2$ be some families of atomic subsets of $\omega$. Then the semigroups $\boldsymbol{B}_{\omega}^{\mathscr{F}^1}$ and $\boldsymbol{B}_{\omega}^{\mathscr{F}^2}$ are isomorphic if and only if there exists an integer $n$ such that
\begin{equation*}
  \mathscr{F}^1=\left\{n+F\colon F\in\mathscr{F}^2\right\}.
\end{equation*}
\end{theorem}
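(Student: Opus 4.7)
The plan is to handle each direction by reducing via Proposition~\ref{proposition-2.5} to the normalized case where $\min\bigcup\mathscr{F}=0$. For sufficiency, suppose $\mathscr{F}^1=\{n+F:F\in\mathscr{F}^2\}$ and set $k_0^i=\min\bigcup\mathscr{F}^i$, so that $k_0^1=n+k_0^2$. The normalized families produced by Proposition~\ref{proposition-2.5} then coincide:
\[
\mathscr{F}^1_0=\{-k_0^1+n+F:F\in\mathscr{F}^2\}=\{-k_0^2+F:F\in\mathscr{F}^2\}=\mathscr{F}^2_0,
\]
and hence $\boldsymbol{B}_\omega^{\mathscr{F}^1}\cong\boldsymbol{B}_\omega^{\mathscr{F}^1_0}=\boldsymbol{B}_\omega^{\mathscr{F}^2_0}\cong\boldsymbol{B}_\omega^{\mathscr{F}^2}$.

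For necessity, assume $\boldsymbol{B}_\omega^{\mathscr{F}^1}\cong\boldsymbol{B}_\omega^{\mathscr{F}^2}$. Replacing each family by its normalization (again by Proposition~\ref{proposition-2.5}), it suffices to prove the claim that if $\min\bigcup\mathscr{F}^1=\min\bigcup\mathscr{F}^2=0$ and the semigroups are isomorphic, then $\mathscr{F}^1=\mathscr{F}^2$; the general case then yields $n=k_0^1-k_0^2$. Working inside $\mathscr{B}_\omega^{\Rsh}(\boldsymbol{F}^i_{\min})$ via Theorem~\ref{theorem-2.4}, for each $m\in\omega$ I call $E_m=\{(m,k,m):k\in\boldsymbol{F}^i,\ k\leqslant m\}$ the \emph{column at $m$}; it is a chain of idempotents of size $|\boldsymbol{F}^i\cap[0,m]|$. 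Direct computation in the Brandt extension shows that $(m_1,k_1,m_1)\cdot(m_2,k_2,m_2)=\mathscr{O}$ when $m_1\neq m_2$ and equals $(m_1,\min\{k_1,k_2\},m_1)\neq\mathscr{O}$ otherwise, so the relation $e\sim f\Leftrightarrow ef\neq\mathscr{O}$ on non-zero idempotents is an equivalence whose classes are precisely the columns. Since this relation is defined purely from the semigroup operation and the zero, it is preserved by any isomorphism, and therefore the multiset $\{|E_m|:m\in\omega\}$ of column sizes is an isomorphism invariant.

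Writing $\boldsymbol{F}^i=\{0=k^i_0<k^i_1<k^i_2<\cdots\}$, we have $|E_m|=s+1$ precisely for $k^i_s\leqslant m<k^i_{s+1}$, so the number of columns of size $s+1$ equals $k^i_{s+1}-k^i_s$ (and is infinite exactly when $k^i_s=\max\boldsymbol{F}^i$). Matching these multiplicities between the two families forces $k^1_s=k^2_s$ for every $s$, whence $\boldsymbol{F}^1=\boldsymbol{F}^2$ and $\mathscr{F}^1=\mathscr{F}^2$. The main obstacle is the semigroup-theoretic isolation of the column structure via the relation $ef\neq\mathscr{O}$; once that is in place, the remainder reduces to combinatorial bookkeeping on the multiset of column sizes.
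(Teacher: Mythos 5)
Your proof is correct and follows essentially the same route as the paper: both directions reduce to Proposition~\ref{proposition-2.5}, and necessity rests on counting, for each length, the chains of non-zero idempotents over a fixed index --- your $\sim$-classes under $ef\neq\mathscr{O}$ are exactly the maximal chains of idempotents that the paper extracts from Corollary~\ref{corollary-2.2} via the natural partial order. Your packaging of the invariant as an equivalence relation definable from the multiplication and the zero, together with the full reconstruction of $\boldsymbol{F}$ from the multiset of class sizes, is merely a cleaner write-up of the same counting argument.
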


\begin{proof}[\textsl{Proof}]
The implication  $(\Leftarrow)$ follows from Proposition~\ref{proposition-2.5}.

$(\Rightarrow)$ Put $\boldsymbol{F}^1=\bigcup\mathscr{F}^1$ and $\boldsymbol{F}^2=\bigcup\mathscr{F}^2$. By Proposition~\ref{proposition-2.5}. without loss of generality we may assume that $0\in \boldsymbol{F}^1\cap\boldsymbol{F}^2$, i.e., $\{0\}\in\mathscr{F}^1$ and $\{0\}\in\mathscr{F}^2$.

Suppose to the contrary that the semigroups $\boldsymbol{B}_{\omega}^{\mathscr{F}^1}$ and $\boldsymbol{B}_{\omega}^{\mathscr{F}^2}$ are isomorphic but $\mathscr{F}^1\neq\mathscr{F}^2$. Since $\mathscr{F}^1$ and $\mathscr{F}^2$ are some families of atomic subsets of $\omega$, we get that $\boldsymbol{F}^1\neq\boldsymbol{F}^2$. Hence without loss of generality we may assume that there exists the minimum positive integer $m$ of the set $\boldsymbol{F}^1$ such that $m\notin \boldsymbol{F}^2$. Put
\begin{equation*}
\widetilde{\boldsymbol{F}}=\left\{k\in\boldsymbol{F}^2\colon k<m\right\}.
\end{equation*}
We enumerate the set $\widetilde{\boldsymbol{F}}=\left\{k_0,k_1,\ldots,k_n\right\}$ in the following way
\begin{equation*}
k_0=0<k_1<\cdots<k_n.
\end{equation*}
Then we have that $\widetilde{\boldsymbol{F}}\subset \boldsymbol{F}^1$.

By Lemma~2 of \cite{Gutik-Mykhalenych=2020} a non-zero element $(i,j,\{k\})$ of the semigroup $\boldsymbol{B}_{\omega}^{\mathscr{F}^1}$ (or $\boldsymbol{B}_{\omega}^{\mathscr{F}^2}$) is an idempotent if and only if $i=j$. This and Corollary~\ref{corollary-2.2} imply the semigroup $\boldsymbol{B}_{\omega}^{\mathscr{F}^1}$ contains exactly $m-k_n$ distinct chains (or a chain) of idempotents of the length $k_n+2$, but the semigroup $\boldsymbol{B}_{\omega}^{\mathscr{F}^1}$ contains at least $m-k_n+1$ distinct chains  of idempotents of the length $k_n+2$. This contradicts that the semigroups $\boldsymbol{B}_{\omega}^{\mathscr{F}^1}$ and $\boldsymbol{B}_{\omega}^{\mathscr{F}^2}$ are isomorphic. The obtained contradiction implies the implication.
\end{proof}

For any $i,j\in \omega$ we denote
\begin{equation*}
  \boldsymbol{F}_{\min}^{(i,j)_{\Rsh}}=\left\{(i,k,j)\colon(i,k,j)\in\mathscr{B}_{\omega}^{\Rsh}(\boldsymbol{F}_{\min})\right\}
\end{equation*}
and
\begin{equation*}
  \omega_{\min}^{(i,j)}=\left\{(i,k,j)\colon(i,k,j)\in\mathscr{B}_{\omega}(\omega_{\min})\right\},
\end{equation*}
where by $\omega_{\min}$ we denote the semilattice $(\omega,\min)$.

\begin{lemma}\label{lemma-2.7}
In the semigroup $\boldsymbol{B}_{\omega}^{\mathscr{F}}$ both equations $A\cdot X=B$ and $X\cdot A=B$ have only finitely many solutions for $B\neq\boldsymbol{0}$.
\end{lemma}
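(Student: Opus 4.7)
The plan is to work under the isomorphism $\mathfrak{f}$ of Theorem~\ref{theorem-2.4}, since multiplication in the Brandt extension $\mathscr{B}_{\omega}^{\Rsh}(\boldsymbol{F}_{\min})$ is far more transparent than the piecewise formula defining $\boldsymbol{B}_{\omega}^{\mathscr{F}}$. We may assume $A\ne\boldsymbol{0}$ (otherwise $A\cdot X=\boldsymbol{0}\ne B$ for every $X$), and write $\mathfrak{f}(A)=(\alpha_A,k_A,\beta_A)$, $\mathfrak{f}(B)=(\alpha_B,k_B,\beta_B)$. For an unknown $X=(i,j,\{k\})$ we have $\mathfrak{f}(X)=(i+k,k,j+k)=:(\alpha,k,\beta)$, subject to $\alpha\geqslant k$ and $\beta\geqslant k$. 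By the Brandt multiplication, $\mathfrak{f}(A)\cdot\mathfrak{f}(X)$ is non-zero precisely when $\beta_A=\alpha$, in which case it equals $(\alpha_A,\min\{k_A,k\},\beta)$.

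Equating with $\mathfrak{f}(B)$ immediately forces $\alpha=\beta_A$, $\beta=\beta_B$, and $\alpha_A=\alpha_B$ (the last being a compatibility condition on the pair $A,B$; if it fails the equation has no solutions whatsoever). The only remaining freedom is the value of $k\in\boldsymbol{F}$, subject to $\min\{k_A,k\}=k_B$ together with the non-negativity constraints $k\leqslant\beta_A$ and $k\leqslant\beta_B$ arising from $i=\beta_A-k\in\omega$ and $j=\beta_B-k\in\omega$. The equation $\min\{k_A,k\}=k_B$ splits into two cases: either $k=k_B$ (possible only when $k_A\geqslant k_B$, contributing at most one value of $k$), or $k_A=k_B$ and $k\geqslant k_B$. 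In the second case $k$ ranges over $\{k\in\boldsymbol{F}\colon k_B\leqslant k\leqslant \min\{\beta_A,\beta_B\}\}$, a set that is finite even when $\boldsymbol{F}$ itself is infinite. Combining the two cases yields only finitely many admissible $k$, and since $(i,j)$ is then determined by $i=\beta_A-k$, $j=\beta_B-k$, only finitely many $X$.

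The argument for $X\cdot A=B$ is entirely symmetric: the non-zero condition becomes $\beta=\alpha_A$, the second coordinate again yields $\min\{k,k_A\}=k_B$, and the non-negativity of $i,j$ bounds $k$ by $\min\{\alpha_A,\alpha_B\}$. Alternatively one may invoke inverses, since $\boldsymbol{B}_{\omega}^{\mathscr{F}}$ is an inverse semigroup and inversion converts $X\cdot A=B$ into $A^{-1}\cdot X^{-1}=B^{-1}$, reducing it to the already-handled case. No deep obstacle is anticipated; the only real care is in the bookkeeping translating between the coordinates $(i,j,\{k\})$ on $\boldsymbol{B}_{\omega}^{\mathscr{F}}$ and $(i+k,k,j+k)$ on $\mathscr{B}_{\omega}^{\Rsh}(\boldsymbol{F}_{\min})$ so that the bounds on $k$ come out correctly, and in separating the case $k=k_B$ from the case $k>k_A=k_B$ to see that neither can produce infinitely many solutions.
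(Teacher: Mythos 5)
Your proposal is correct and follows essentially the same route as the paper: both pass through the isomorphism $\mathfrak{f}$ to $\mathscr{B}_{\omega}^{\Rsh}(\boldsymbol{F}_{\min})$, observe that the Brandt multiplication forces the first and third coordinates of any solution, and conclude that the solutions all lie in a set of the form $\boldsymbol{F}_{\min}^{(\beta_A,\beta_B)_{\Rsh}}$, which is finite. Your explicit case analysis of $\min\{k_A,k\}=k_B$ is just a more detailed version of the paper's appeal to the finiteness of $\boldsymbol{F}_{\min}^{(i,j)_{\Rsh}}$.
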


\begin{proof}[\textsl{Proof}]
We show that the equation $A\cdot X=B$ has finitely many solutions for $B\neq\mathscr{O}$ in the semigroup $\mathscr{B}_{\omega}^{\Rsh}(\boldsymbol{F}_{\min})$. In the case of the equation $X\cdot A=B$ the proof is similar.

We denote
\begin{equation*}
  A=(i_A,k_A,j_A), \qquad X=(i_X,k_X,j_X) \qquad \hbox{and} \qquad B=(i_B,k_B,j_B),
\end{equation*}
where $(i_X,k_X,j_X)$ is a variable, $(i_A,k_A,j_A)$ and $(i_B,k_B,j_B)$ are constants of the equation
\begin{equation}\label{eq-2.3}
  (i_A,k_A,j_A)\cdot(i_X,k_X,j_X)=(i_B,k_B,j_B).
\end{equation}
First we establish the solution of equation \eqref{eq-2.3} in the Brandt $\omega$-extension $\mathscr{B}_{\omega}(\omega_{\min})$ of the semilattice $\omega_{\min}$. The semigroup operation in $\mathscr{B}_{\omega}(\omega_{\min})$ implies that equation \eqref{eq-2.3} has a non-empty set of solutions if and only if $k_B\preccurlyeq k_A$ in $\omega_{\min}$ and $i_A=i_B$. Hence we have that the set of solutions of \eqref{eq-2.3} is a subset of $\omega_{\min}^{(j_A,j_B)}$. This implies that the set of solutions of equation \eqref{eq-2.3}  is a subset of $\boldsymbol{F}_{\min}^{(j_A,j_B)_{\Rsh}}$. This and Theorem~\ref{theorem-2.4} imply the statement of the lemma.
\end{proof}

\section{\textbf{On topogizations of the semigroup $\mathscr{B}_{\omega}^{\Rsh}(\boldsymbol{F}_{\min})$}}

By Proposition~\ref{proposition-2.5} for any family $\mathscr{F}$ of atomic subsets of $\omega$ the semigroup  $\boldsymbol{B}_{\omega}^{\mathscr{F}}$  is isomorphic to the semigroup $\boldsymbol{B}_{\omega}^{\mathscr{F}_0}$ where $\mathscr{F}_0$ is a family of atomic subsets of $\omega$ such that $0\in\displaystyle\bigcup\mathscr{F}_0$. Hence later we shall assume that $0\in \boldsymbol{F}$, i.e., $(i,0,i)\in \mathscr{B}_{\omega}^{\Rsh}(\boldsymbol{F}_{\min})$ for any $i,j\in\omega$.

\begin{proposition}\label{proposition-3.1}
Let $\tau$ be a shift-continuous $T_1$-topology on the semigroup $\mathscr{B}_{\omega}^{\Rsh}(\boldsymbol{F}_{\min})$. Then every non-zero element of $\mathscr{B}_{\omega}^{\Rsh}(\boldsymbol{F}_{\min})$ is an isolated point in $\left(\mathscr{B}_{\omega}^{\Rsh}(\boldsymbol{F}_{\min}),\tau\right)$.
\end{proposition}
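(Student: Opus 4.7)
The plan is to show that, for every non-zero $B=(a,b,c)\in\mathscr{B}_{\omega}^{\Rsh}(\boldsymbol{F}_{\min})$, there is an open neighbourhood $U\ni B$ contained in a \emph{finite} subset of the semigroup; since finite subsets of a $T_1$-space are discrete in the subspace topology, this will let me carve out $\{B\}$ as the intersection of two open sets.

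By Proposition~\ref{proposition-2.5} I may assume without loss of generality that $0\in\boldsymbol{F}$. Under this assumption both $E_L:=(a,0,a)$ and $E_R:=(c,0,c)$ lie in $\mathscr{B}_{\omega}^{\Rsh}(\boldsymbol{F}_{\min})$. Writing $X=(\alpha,s,\beta)$ for a generic non-zero element, a direct computation with the Brandt $\omega$-extension multiplication gives $E_L\cdot X=\mathscr{O}$ unless $\alpha=a$, in which case $E_L\cdot X=(a,0,\beta)$; dually $X\cdot E_R=\mathscr{O}$ unless $\beta=c$, in which case $X\cdot E_R=(\alpha,0,c)$. In particular $E_L\cdot B=B\cdot E_R=(a,0,c)=:Z\neq\mathscr{O}$.

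Because $\tau$ is $T_1$, I can pick an open $V\ni Z$ with $\mathscr{O}\notin V$. Shift-continuity then makes $L_{E_L}^{-1}(V)$ and $R_{E_R}^{-1}(V)$ open neighbourhoods of $B$, and the ``kill-$\mathscr{O}$'' behaviour above forces the former into $\{X\colon\alpha=a\}$ and the latter into $\{X\colon\beta=c\}$. Their intersection $U$ is hence an open neighbourhood of $B$ contained in the finite set
\[
T_{a,c}:=\bigl\{(a,s,c)\colon s\in\boldsymbol{F},\ s\leqslant\min\{a,c\}\bigr\}.
\]
Since $\tau$ is $T_1$, the finite set $U\setminus\{B\}$ is closed, and
\[
\{B\}=U\cap\bigl(\mathscr{B}_{\omega}^{\Rsh}(\boldsymbol{F}_{\min})\setminus(U\setminus\{B\})\bigr)
\]
is then open.

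The delicate point is realising the two ``projection-like'' maps as actual translations by elements of $\mathscr{B}_{\omega}^{\Rsh}(\boldsymbol{F}_{\min})$: the reduction $0\in\boldsymbol{F}$ is what guarantees that $E_L$ and $E_R$ sit inside the semigroup, and choosing $s=0$ in the middle slot is what collapses the $\min$ in the Brandt product to $0$ and produces the desired ``kill-$\mathscr{O}$'' shifts. Once this setup is in place, the remainder is routine multiplication in the Brandt extension together with the standard observation that a finite subset of a $T_1$-space is discrete.
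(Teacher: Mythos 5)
Your proof is correct and follows essentially the same route as the paper's: both sandwich the point between the idempotents $(a,0,a)$ and $(c,0,c)$, use the $T_1$ axiom to separate the resulting product from $\mathscr{O}$, and conclude that an open neighbourhood of the point is trapped in the finite set $\boldsymbol{F}_{\min}^{(a,c)_{\Rsh}}$, whence the point is isolated. The only cosmetic difference is that the paper applies the two-sided translation $x\mapsto(a,0,a)\cdot x\cdot(c,0,c)$ in a single step, whereas you intersect the preimages under the left and right translations separately.
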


\begin{proof}[\textsl{Proof}]
Fix arbitrary $i,j\in \omega$. Since
\begin{equation*}
(i,0,i)\cdot(i,0,j)\cdot(j,0,j)=(i,0,j)
\end{equation*}
the assumption of the proposition implies that for any open neighbourhood $W_{(i,0,j)}\not\ni\mathscr{O}$ of the point $(i,0,j)$ there exists its open neighbourhood $V_{(i,0,j)}$ in the topological space $\left(\mathscr{B}_{\omega}^{\Rsh}(\boldsymbol{F}_{\min}),\tau\right)$ such that
\begin{equation*}
(i,0,i)\cdot V_{(i,0,j)}\cdot(j,0,j)\subseteq W_{(i,0,j)}.
\end{equation*}
The definition of the semigroup operation on $\mathscr{B}_{\omega}^{\Rsh}(\boldsymbol{F}_{\min})$ implies that $V_{(i,0,j)}\subseteq \boldsymbol{F}_{\min}^{(i,j)_{\Rsh}}$. Then  $\boldsymbol{F}_{\min}^{(i,j)_{\Rsh}}$ is an open subset of the set $\left(\mathscr{B}_{\omega}^{\Rsh}(\boldsymbol{F}_{\min}),\tau\right)$ because it is the full preimage of $V_{(i,0,j)}$ under the mapping
\begin{equation*}
  \mathfrak{h}\colon \mathscr{B}_{\omega}^{\Rsh}(\boldsymbol{F}_{\min})\to\mathscr{B}_{\omega}^{\Rsh}(\boldsymbol{F}_{\min}),\; x\mapsto (i,0,i)\cdot x\cdot(j,0,j).
\end{equation*}
By Corollary~\ref{corollary-2.2} the set $\boldsymbol{F}_{\min}^{(i,j)_{\Rsh}}$ is finite, which implies the statement of the proposition.
\end{proof}

Next we shall show that the semigroup $\mathscr{B}_{\omega}^{\Rsh}(\boldsymbol{F}_{\min})$ admits a compact shift-continuous Hausdorff topology.

\begin{example}\label{example-3.2}
A topology $\tau_{\mathrm{Ac}}$ on the semigroup $\mathscr{B}_{\omega}^{\Rsh}(\boldsymbol{F}_{\min})$ is defined as follows:
\begin{enumerate}
\item[a)] all nonzero elements of $\mathscr{B}_{\omega}^{\Rsh}(\boldsymbol{F}_{\min})$ are isolated points in $\left(\mathscr{B}_{\omega}^{\Rsh}(\boldsymbol{F}_{\min}),\tau_{\mathrm{Ac}}\right)$;
\item[b)] the family
\begin{align*}
  {\mathscr B}_{\mathrm{Ac}}(\mathscr{O})=\Big\{ U_{(i_1,j_1),\ldots,(i_n,j_n)}& =\mathscr{B}_{\omega}^{\Rsh}(\boldsymbol{F}_{\min})\setminus \left(\boldsymbol{F}_{\min}^{(i_1,j_1)_{\Rsh}}\cup\cdots\cup\boldsymbol{F}_{\min}^{(i_n,j_n)_{\Rsh}}\right) \colon \\
    & n,i_1,j_1,\ldots,i_n,j_n\in\omega\Big\}
\end{align*}
is the base of the topology $\tau_{\mathrm{Ac}}$ at the point $\mathscr{O}\in \mathscr{B}_{\omega}^{\Rsh}(\boldsymbol{F}_{\min})$.
\end{enumerate}
Corollary~\ref{corollary-2.2} implies that the set $\boldsymbol{F}_{\min}^{(i,j)_{\Rsh}}$ is finite for any $i,j\in\omega$ which implies that the topological space $\left(\mathscr{B}_{\omega}^{\Rsh}(\boldsymbol{F}_{\min}),\tau_{\mathrm{Ac}}\right)$ is homeomorphic to the one-point Alexandroff compactification of the discrete space $\mathscr{B}_{\omega}^{\Rsh}(\boldsymbol{F}_{\min})\setminus\left\{\mathscr{O}\right\}$.
\end{example}

\begin{proposition}\label{proposition-3.3}
$\left(\mathscr{B}_{\omega}^{\Rsh}(\boldsymbol{F}_{\min}),\tau_{\mathrm{Ac}}\right)$ is a Hausdorff compact semitopological semigroup with continuous inversion.
\end{proposition}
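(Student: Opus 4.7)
The plan is to verify the four properties (Hausdorffness, compactness, separate continuity of multiplication, and continuity of inversion) in turn, with all four arguments resting on Corollary~\ref{corollary-2.2} (every set $\boldsymbol{F}_{\min}^{(i,j)_{\Rsh}}$ is finite) together with the trivial fact that the sets $\boldsymbol{F}_{\min}^{(i,j)_{\Rsh}}$ are pairwise disjoint, since an element of the form $(i,k,j)$ belongs to $\boldsymbol{F}_{\min}^{(i,j)_{\Rsh}}$ and to no other such set.

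For the Hausdorff and compact parts I would appeal to the observation already made at the end of Example~\ref{example-3.2}: because every finite subset of $\mathscr{B}_{\omega}^{\Rsh}(\boldsymbol{F}_{\min})\setminus\{\mathscr{O}\}$ is contained in a union of finitely many of the finite sets $\boldsymbol{F}_{\min}^{(i,j)_{\Rsh}}$, the family $\mathscr{B}_{\mathrm{Ac}}(\mathscr{O})$ is cofinal with the complements of finite subsets of $\mathscr{B}_{\omega}^{\Rsh}(\boldsymbol{F}_{\min})\setminus\{\mathscr{O}\}$. Hence $(\mathscr{B}_{\omega}^{\Rsh}(\boldsymbol{F}_{\min}),\tau_{\mathrm{Ac}})$ is homeomorphic to the one-point Alexandroff compactification of a discrete countable space, and is therefore Hausdorff and compact.

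For separate continuity, continuity of either translation at any nonzero isolated point is automatic. Continuity of the left translation $\lambda_a\colon x\mapsto a\cdot x$ at $\mathscr{O}$ is trivial when $a=\mathscr{O}$; otherwise, writing $a=(i_a,k_a,j_a)$ and fixing a basic neighborhood $U=U_{(i_1,j_1),\ldots,(i_n,j_n)}$ of $\mathscr{O}$, I would note that for a nonzero $x=(i_x,k_x,j_x)$ the product $a\cdot x$ equals $\mathscr{O}$ unless $i_x=j_a$, in which case $a\cdot x=(i_a,\min\{k_a,k_x\},j_x)$, and the latter lies in $\boldsymbol{F}_{\min}^{(i_k,j_k)_{\Rsh}}$ precisely when $i_k=i_a$ and $j_k=j_x$. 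Letting $k_1,\ldots,k_m$ be those indices with $i_{k_s}=i_a$ (where $m=0$ is allowed), the basic neighborhood
\begin{equation*}
V=U_{(j_a,j_{k_1}),\ldots,(j_a,j_{k_m})}
\end{equation*}
of $\mathscr{O}$ (read as the whole space when $m=0$) satisfies $\lambda_a(V)\subseteq U$. The argument for the right translation $\rho_a\colon x\mapsto x\cdot a$ is entirely symmetric.

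For continuity of the inversion $\operatorname{inv}\colon(i,k,j)\mapsto(j,k,i)$, $\mathscr{O}\mapsto\mathscr{O}$, continuity at any isolated point is automatic, and the identity $\operatorname{inv}\bigl(\boldsymbol{F}_{\min}^{(i,j)_{\Rsh}}\bigr)=\boldsymbol{F}_{\min}^{(j,i)_{\Rsh}}$ yields $\operatorname{inv}^{-1}\bigl(U_{(i_1,j_1),\ldots,(i_n,j_n)}\bigr)=U_{(j_1,i_1),\ldots,(j_n,i_n)}$, again a basic neighborhood of $\mathscr{O}$. I do not anticipate any genuine obstacle here; the only spot that demands some care is the bookkeeping in the separate-continuity step, where one must produce an \emph{explicit} basic neighborhood $V$ rather than merely a cofinite set, and this reduces to noticing that the would-be offending $x$'s must all lie inside finitely many of the finite sets $\boldsymbol{F}_{\min}^{(p,q)_{\Rsh}}$.
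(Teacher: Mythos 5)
Your proposal is correct and follows essentially the same route as the paper: both arguments reduce Hausdorffness and compactness to the Alexandroff-compactification observation of Example~\ref{example-3.2}, verify separate continuity only at $\mathscr{O}$ by exhibiting an explicit basic neighbourhood (the paper uses the single uniform neighbourhood $U_{\boldsymbol{K}}$ built from all relevant indices, you use a slightly more tailored one, but the computation is the same), and establish continuity of inversion via the identity $\left(U_{(j_1,i_1),\ldots,(j_n,i_n)}\right)^{-1}\subseteq U_{(i_1,j_1),\ldots,(i_n,j_n)}$. No gaps.
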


\begin{proof}[\textsl{Proof}]
It is obvious that the topology $\tau_{\mathrm{Ac}}$ is Hausdorff and compact.

Fix any $U_{(i_1,j_1),\ldots,(i_n,j_n)}\in {\mathscr B}_{\mathrm{Ac}}(\mathscr{O})$ and $(i,k,j), (l,m,p)\in\mathscr{B}_{\omega}^{\Rsh}(\boldsymbol{F}_{\min})\setminus \left\{\mathscr{O}\right\}$. Put
\begin{equation*}
\boldsymbol{K}=\{i,i_1,\ldots,i_n,j,j_1,\ldots,j_n\} \qquad \hbox{and} \qquad U_{\boldsymbol{K}}=\mathscr{B}_{\omega}^{\Rsh}(\boldsymbol{F}_{\min})\setminus\bigcup_{x,y\in\boldsymbol{K}}\boldsymbol{F}_{\min}^{(x,y)_{\Rsh}}.
\end{equation*}
Then we have that $U_{\mathbf{K}}\in {\mathscr B}_{\mathrm{Ac}}(\mathscr{O})$ and the following conditions hold
\begin{equation*}
  U_{\boldsymbol{K}}\cdot \{(i,k,j)\}\subseteq  U_{(i_1,j_1),\ldots,(i_n,j_n)},
\end{equation*}
\begin{equation*}
  \{(i,k,j)\}\cdot U_{\boldsymbol{K}}\subseteq  U_{(i_1,j_1),\ldots,(i_n,j_n)},
\end{equation*}
\begin{equation*}
  \left\{\mathscr{O}\right\}\cdot  \{(i,k,j)\}= \{(i,k,j)\}\cdot \left\{\mathscr{O}\right\}=\left\{\mathscr{O}\right\}\subseteq  U_{(i_1,j_1),\ldots,(i_n,j_n)},
\end{equation*}
\begin{equation*}
  \left\{\mathscr{O}\right\}\cdot  U_{(i_1,j_1),\ldots,(i_n,j_n)}= U_{(i_1,j_1),\ldots,(i_n,j_n)}\cdot \left\{\mathscr{O}\right\}=\left\{\mathscr{O}\right\}\subseteq  U_{(i_1,j_1),\ldots,(i_n,j_n)},
\end{equation*}
\begin{equation*}
  \{(i,k,j)\}\cdot\{(l,m,p)\}=\left\{\mathscr{O}\right\}\subseteq  U_{(i_1,j_1),\ldots,(i_n,j_n)}, \quad \hbox{if} \quad j\neq l,
\end{equation*}
\begin{equation*}
  \{(i,k,j)\}\cdot\{(l,m,p)\}=\{(i,\min\{k,m\},p)\}, \quad \hbox{if} \quad j=l,
\end{equation*}
\begin{equation*}
  \left(U_{(j_1,i_1),\ldots,(j_n,i_n)}\right)^{-1}\subseteq  U_{(i_1,j_1),\ldots,(i_n,j_n)}
\end{equation*}
Therefore, $\left(\mathscr{B}_{\omega}^{\Rsh}(\boldsymbol{F}_{\min}),\tau_{\mathrm{Ac}}\right)$ is a semitopological inverse semigroup with continuous inversion.
\end{proof}

We recall that a topological space $X$ is said to be
\begin{itemize}
  \item \emph{perfectly normal} if $X$ is normal and and every closed subset of $X$ is a $G_\delta$-set;
  \item \emph{scattered} if $X$ does not contain a non-empty dense-in-itself subspace;
  \item \emph{hereditarily disconnected} (or \emph{totally disconnected}) if $X$ does not contain any connected subsets of cardinality larger than one;
  \item \emph{compact} if each open cover of $X$ has a finite subcover;
  \item \emph{countably compact} if each open countable cover of $X$ has a finite subcover;
  \item \emph{$H$-closed} if $X$ is a closed subspace of every Hausdorff topological space containing $X$;
  \item \emph{infra H-closed} provided that any continuous image of $X$ into any first countable Hausdorff space is closed (see \cite{Hajek-Todd-1975});
  \item \emph{feebly compact}  if each locally finite open cover of $X$ is finite~\cite{Bagley-Connell-McKnight-Jr-1958};
  \item $d$-\emph{feebly compact} (or \emph{\textsf{DFCC}}) if every discrete family of open subsets in $X$ is finite (see \cite{Matveev-1998});
  \item \emph{pseudocompact} if $X$ is Tychonoff and each continuous real-valued function on $X$ is bounded;
  \item $Y$-\emph{compact} for some topological space $Y$, if the image $f(X)$ is compact for any continuous map $f\colon X\to Y$.
\end{itemize}

The relations between above defined compact-like spaces are presented at the diagram in \cite{Gutik-Sobol=2018}.

\begin{lemma}\label{lemma-3.4}
Every shift-continuous $T_1$-topology $\tau$ on the semigroup $\mathscr{B}_{\omega}^{\Rsh}(\boldsymbol{F}_{\min})$ is regular.
\end{lemma}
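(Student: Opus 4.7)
The plan is to exploit the rigidity produced by Proposition~\ref{proposition-3.1}: every non-zero element of $\mathscr{B}_{\omega}^{\Rsh}(\boldsymbol{F}_{\min})$ is an isolated point of $\tau$, so the only candidate for a non-isolated point is the zero $\mathscr{O}$. Once this is fixed, verifying regularity reduces to separating an arbitrary point $x$ from an arbitrary closed set $F$ not containing $x$ by disjoint open sets, in two elementary cases.

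First I would handle the case $x\neq\mathscr{O}$. Since $x$ is isolated, $\{x\}$ is open; since $\tau$ is $T_1$, $\{x\}$ is also closed. Hence $U=\{x\}$ and $V=\mathscr{B}_{\omega}^{\Rsh}(\boldsymbol{F}_{\min})\setminus\{x\}$ are disjoint open sets with $x\in U$ and $F\subseteq V$.

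Next I would treat the case $x=\mathscr{O}$. Because $\mathscr{O}\notin F$, every element of $F$ is non-zero and hence isolated in $(\mathscr{B}_{\omega}^{\Rsh}(\boldsymbol{F}_{\min}),\tau)$ by Proposition~\ref{proposition-3.1}, so $F$ is the union of open singletons and is therefore itself open. Then $V=F$ and $U=\mathscr{B}_{\omega}^{\Rsh}(\boldsymbol{F}_{\min})\setminus F$ are disjoint open sets with $\mathscr{O}\in U$ and $F\subseteq V$, and the verification is complete.

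There is essentially no serious obstacle here beyond spotting the right reduction: Proposition~\ref{proposition-3.1} makes $\tau$ of the form ``isolated points together with a single possibly non-isolated zero'', and in any such $T_1$-space every closed set missing the distinguished point is automatically open, which forces regularity. The only thing to be careful about is not assuming Hausdorffness (we only have $T_1$), but the argument above never uses it; the separation is obtained directly from the isolated-point structure.
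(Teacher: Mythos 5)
Your proof is correct and follows essentially the same route as the paper: both rest on Proposition~\ref{proposition-3.1} (all non-zero points are isolated) and the resulting observation that any set avoiding $\mathscr{O}$ is open, the paper phrasing this as ``every open neighbourhood of $\mathscr{O}$ is clopen'' while you verify the separation axiom directly in the two cases. No gap.
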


\begin{proof}[\textsl{Proof}]
By Proposition~\ref{proposition-3.3} every non-zero element of the semigroup $\mathscr{B}_{\omega}^{\Rsh}(\boldsymbol{F}_{\min})$ is an isolated point in the space $\left(\mathscr{B}_{\omega}^{\Rsh}(\boldsymbol{F}_{\min}),\tau\right)$. Hence every open neighbourhood $V(\mathscr{O})$ of the zero $\mathscr{O}$ is a closed subset in $\left(\mathscr{B}_{\omega}^{\Rsh}(\boldsymbol{F}_{\min}),\tau\right)$, which implies that  the topological space $\left(\mathscr{B}_{\omega}^{\Rsh}(\boldsymbol{F}_{\min}),\tau\right)$ is regular.
\end{proof}

Since in any countable $T_1$-space $X$ every open subset of $X$ is a $F_\sigma$-set, Theorem~1.5.17 from \cite{Engelking-1989} and Lemma~\ref{lemma-3.4} imply the following corollary.

\begin{corollary}\label{corollary-3.5}
Let $\tau$ be a shift-continuous $T_1$-topology on the semigroup $\mathscr{B}_{\omega}^{\Rsh}(\boldsymbol{F}_{\min})$. Then $\left(\mathscr{B}_{\omega}^{\Rsh}(\boldsymbol{F}_{\min}),\tau\right)$  is a perfectly normal, scattered, hereditarily disconnected space.
\end{corollary}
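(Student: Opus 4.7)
The plan is to use three ingredients: countability of the semigroup, regularity (Lemma~\ref{lemma-3.4}), and the fact (Proposition~\ref{proposition-3.1}) that every non-zero point is isolated in $\tau$. Note that $\mathscr{B}_{\omega}^{\Rsh}(\boldsymbol{F}_{\min})$ is a countable set.

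First, to get normality, I would invoke Theorem~1.5.17 of \cite{Engelking-1989}, which states that every regular Lindel\"of space is normal. Since $\left(\mathscr{B}_{\omega}^{\Rsh}(\boldsymbol{F}_{\min}),\tau\right)$ is countable, it is Lindel\"of, and by Lemma~\ref{lemma-3.4} it is regular, hence normal. To upgrade normality to \emph{perfect} normality it suffices to show that every closed subset is a $G_\delta$-set, or equivalently that every open subset is an $F_\sigma$-set. But in a countable $T_1$-space singletons are closed, so any subset (in particular any open set) is a countable union of closed singletons, and is therefore an $F_\sigma$-set.

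Next, for scatteredness, recall that $X$ is scattered if and only if every non-empty subspace $Y\subseteq X$ contains a point isolated in $Y$. By Proposition~\ref{proposition-3.1} every non-zero point of $\mathscr{B}_{\omega}^{\Rsh}(\boldsymbol{F}_{\min})$ is isolated in the whole space, hence remains isolated in any subspace containing it. The only non-empty subspace failing to contain a non-zero element is $\{\mathscr{O}\}$, which is trivially scattered. This settles scatteredness.

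Finally, for hereditary disconnectedness, I would argue that any subset $A$ with at least two points is disconnected. If $A$ contains a non-zero element $x$, then by Proposition~\ref{proposition-3.1} $\{x\}$ is open in the subspace $A$, and because $\tau$ is $T_1$ the set $A\setminus\{x\}$ is also open in $A$; this yields a non-trivial clopen partition of $A$. If $A$ contains no non-zero element, then $A\subseteq\{\mathscr{O}\}$ and $|A|\le 1$. Hence every connected subset has cardinality at most one. No step appears to be a genuine obstacle: once countability and Lemma~\ref{lemma-3.4} are combined with Proposition~\ref{proposition-3.1}, all three conclusions follow immediately, so this corollary is really just a packaging of the previous results.
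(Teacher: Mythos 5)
Your proof is correct and follows essentially the same route as the paper, which deduces the corollary in a single sentence from countability, the $T_1$ axiom, Theorem~1.5.17 of \cite{Engelking-1989} and the regularity established in Lemma~\ref{lemma-3.4}. You in fact supply a detail the paper leaves implicit: scatteredness, unlike normality and hereditary disconnectedness, does not follow from countability and regularity alone (the space $\mathbb{Q}$ is a counterexample), and your explicit appeal to Proposition~\ref{proposition-3.1} (every non-zero element is isolated) is precisely the missing ingredient.
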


By $\mathfrak{D}(\omega)$  we denote the infinite countable discrete space and by $\mathbb{R}$ the set of all real numbers with the usual topology.

\begin{theorem}\label{theorem-3.6}
Let $\tau$ be a shift-continuous $T_1$-topology on the semigroup $\mathscr{B}_{\omega}^{\Rsh}(\boldsymbol{F}_{\min})$. Then the following statements are equivalent:
\begin{itemize}
  \item[$(i)$] $\left(\mathscr{B}_{\omega}^{\Rsh}(\boldsymbol{F}_{\min}),\tau\right)$ is compact;
  \item[$(ii)$] $\tau=\tau_{\mathrm{Ac}}$;
  \item[$(iii)$] $\left(\mathscr{B}_{\omega}^{\Rsh}(\boldsymbol{F}_{\min}),\tau\right)$ is $H$-closed;
  \item[$(iv)$] $\left(\mathscr{B}_{\omega}^{\Rsh}(\boldsymbol{F}_{\min}),\tau\right)$ is feebly compact;
  \item[$(v)$] $\left(\mathscr{B}_{\omega}^{\Rsh}(\boldsymbol{F}_{\min}),\tau\right)$ is infra $H$-closed;
  \item[$(vi)$] $\left(\mathscr{B}_{\omega}^{\Rsh}(\boldsymbol{F}_{\min}),\tau\right)$ is $d$-feebly compact;
  \item[$(vii)$] $\left(\mathscr{B}_{\omega}^{\Rsh}(\boldsymbol{F}_{\min}),\tau\right)$ is pseudocompact;
  \item[$(viii)$] $\left(\mathscr{B}_{\omega}^{\Rsh}(\boldsymbol{F}_{\min}),\tau\right)$ is $\mathbb{R}$-compact;
  \item[$(ix)$] $\left(\mathscr{B}_{\omega}^{\Rsh}(\boldsymbol{F}_{\min}),\tau\right)$ is $\mathfrak{D}(\omega)$-compact.
\end{itemize}
\end{theorem}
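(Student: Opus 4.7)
My plan is to organize the proof as a main cycle $(ii) \Rightarrow (i) \Rightarrow (iii) \Rightarrow (v) \Rightarrow (vi) \Rightarrow (ii)$ together with short side-arguments that fit $(iv), (vii), (viii), (ix)$ between $(i)$ and $(vi)$. Everything then reduces to the single substantive implication $(vi) \Rightarrow (ii)$.

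The chain $(ii) \Rightarrow (i) \Rightarrow (iii) \Rightarrow (v)$ is routine. Proposition~\ref{proposition-3.3} gives $(ii) \Rightarrow (i)$; a compact Hausdorff space is $H$-closed, so $(i) \Rightarrow (iii)$; and if $X$ is $H$-closed then a continuous image of $X$ into a Hausdorff space $Y$ remains $H$-closed as a subspace of $Y$ (immediate from the open-cover characterization together with $f(\overline{A}) \subseteq \overline{f(A)}$), hence closed in $Y$, giving $(iii) \Rightarrow (v)$ since every first countable Hausdorff space is in particular Hausdorff.

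The key step is $(vi) \Rightarrow (ii)$. Assume $\tau$ is shift-continuous, $T_1$ and $d$-feebly compact. By Proposition~\ref{proposition-3.1} every nonzero element is $\tau$-isolated, so $T_1$-ness makes every finite set of nonzero elements $\tau$-closed. Since each $\boldsymbol{F}_{\min}^{(i,j)_{\Rsh}}$ is finite by Corollary~\ref{corollary-2.2}, the basic neighbourhoods $U_{(i_1,j_1),\ldots,(i_n,j_n)}$ of $\mathscr{O}$ from Example~\ref{example-3.2} are cofinite and therefore already $\tau$-open, yielding $\tau_{\mathrm{Ac}} \subseteq \tau$. For the opposite inclusion I argue by contradiction: if some $\tau$-open $U \ni \mathscr{O}$ fails to be $\tau_{\mathrm{Ac}}$-open, then $\mathscr{B}_{\omega}^{\Rsh}(\boldsymbol{F}_{\min}) \setminus U$ is an infinite set of isolated points, and $\{\{x\} : x \in \mathscr{B}_{\omega}^{\Rsh}(\boldsymbol{F}_{\min}) \setminus U\}$ is an infinite family of nonempty open sets. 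It is discrete (every isolated point is its own neighbourhood meeting at most one member, and $U$ itself is a neighbourhood of $\mathscr{O}$ meeting none), contradicting $(vi)$; hence $\tau = \tau_{\mathrm{Ac}}$.

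For $(v) \Rightarrow (vi)$, $(viii) \Rightarrow (vi)$ and $(ix) \Rightarrow (vi)$ I use a common contrapositive. Given an infinite discrete family $\{U_n\}$ of nonempty open sets, pick $x_n \in U_n$; the $x_n$ are pairwise distinct by discreteness, and the same discreteness argument (combined with the $\tau$-closedness of each isolated singleton $\{x_m\}$) produces a neighbourhood of $\mathscr{O}$ disjoint from $\{x_n : n\in\omega\}$, so every subset of $\{x_n : n\in\omega\}$ is $\tau$-closed. The map $f\colon\mathscr{B}_{\omega}^{\Rsh}(\boldsymbol{F}_{\min}) \to \mathbb{R}$ sending $x_n \mapsto 1/(n+1)$ and every other point to $1$ is then continuous with image $\{1\} \cup \{1/(n+1) : n \in \omega\}$, which is bounded but not closed in $\mathbb{R}$, simultaneously refuting $(v)$ and $(viii)$; the analogous map $x_n \mapsto n$, $y \mapsto 0$ into $\mathfrak{D}(\omega)$ is continuous with infinite (non-compact) image, refuting $(ix)$. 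Finally, $(i) \Rightarrow (iv) \Rightarrow (vi)$ is trivial (discrete families are locally finite), and $(i) \Rightarrow (vii) \Rightarrow (iv)$ holds because pseudocompact Tychonoff spaces are feebly compact and Corollary~\ref{corollary-3.5} supplies Tychonoffness. The main obstacle is verifying the $\tau$-closedness of arbitrary subsets of $\{x_n\}$, which is precisely where Proposition~\ref{proposition-3.1} interacts with the discreteness hypothesis; once this is in hand, the construction of $f$ is routine.
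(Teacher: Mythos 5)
Your proposal is correct, but it is organized around a different backbone than the paper's proof. The paper outsources the soft implications $(ii)\Rightarrow(i)\Rightarrow(iii)\Rightarrow(iv)\Rightarrow(v)\Rightarrow(viii)\Rightarrow(ix)$ and $(i)\Rightarrow(vii)\Rightarrow(iv)\Rightarrow(vi)$ to the compactness diagram of \cite{Gutik-Sobol=2018}, uses the regularity supplied by Lemma~\ref{lemma-3.4} to get $(vi)\Rightarrow(iv)$ and $(iii)\Rightarrow(i)$, and then its only written substantive step is $(ix)\Rightarrow(i)$: from an open cover with no finite subcover one builds a continuous surjection onto an infinite subset of $\mathfrak{D}(\omega)$. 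You instead make $(vi)\Rightarrow(ii)$ the load-bearing implication, showing directly that $d$-feeble compactness forces $\tau=\tau_{\mathrm{Ac}}$ (the inclusion $\tau_{\mathrm{Ac}}\subseteq\tau$ from $T_1$ plus Proposition~\ref{proposition-3.1} and Corollary~\ref{corollary-2.2}, the reverse inclusion from the discrete family of singletons outside a non-cofinite neighbourhood of $\mathscr{O}$), and you replace the cited diagram by explicit contrapositive constructions for $(v),(viii),(ix)\Rightarrow(vi)$. Both arguments ultimately rest on the same two structural facts (all nonzero points are isolated; each $\boldsymbol{F}_{\min}^{(i,j)_{\Rsh}}$ is finite), but your route buys two things: it is self-contained, and it actually terminates at $(ii)$ --- in the paper's list of implications $(ii)$ only ever appears as a hypothesis, so the equivalence of $(ii)$ with the rest is left implicit there, whereas your $(vi)\Rightarrow(ii)$ closes that loop explicitly. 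The paper's route is shorter on the page and isolates a single clean construction. Two small points you should write out: the implications $(i)\Rightarrow(viii)$ and $(i)\Rightarrow(ix)$ (continuous images of compacta are compact) are needed to pull $(viii)$ and $(ix)$ into the cycle and are only implicit in your phrase ``fit between $(i)$ and $(vi)$''; and in $(i)\Rightarrow(iii)$ you should note that Hausdorffness, required by the definition of $H$-closed, comes from Lemma~\ref{lemma-3.4} (regular plus $T_1$), not from the bare hypotheses.
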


\begin{proof}[\textsl{Proof}]
Implications $(ii)\Rightarrow(i)\Rightarrow(iii)\Rightarrow(iv)\Rightarrow(v)\Rightarrow(viii)\Rightarrow(ix)$ and  $(i)\Rightarrow(vii)\Rightarrow(iv)\Rightarrow(vi)$ are trivial (see the diagram in \cite{Gutik-Sobol=2018}). By Lemma~\ref{lemma-3.4} we get implications $(vi)\Rightarrow(iv)$ and $(iii)\Rightarrow(i)$.

$(ix)\Rightarrow(i)$ Suppose to the contrary that there exists a shift-continuous $T_1$-topology $\tau$ on the semigroup $\mathscr{B}_{\omega}^{\Rsh}(\boldsymbol{F}_{\min})$ such that $\left(\mathscr{B}_{\omega}^{\Rsh}(\boldsymbol{F}_{\min}),\tau\right)$ is a $\mathfrak{D}(\omega)$-compact non-compact space. Then there exists an open cover $\mathscr{U}=\{U_\alpha\}$ of $\left(\mathscr{B}_{\omega}^{\Rsh}(\boldsymbol{F}_{\min}),\tau\right)$ which does not contain a finite subcover. Fix $U_{\alpha_0}\in \mathscr{U}$ such that $\mathscr{O}\in U_{\alpha_0}$. Since the space $\left(\mathscr{B}_{\omega}^{\Rsh}(\boldsymbol{F}_{\min}),\tau\right)$ is not compact the set $\mathscr{B}_{\omega}^{\Rsh}(\boldsymbol{F}_{\min})\setminus U_{\alpha_0}$ is infinite. We enumerate the set $\mathscr{B}_{\omega}^{\Rsh}(\boldsymbol{F}_{\min})\setminus U_{\alpha_0}$, i.e., put $\{\boldsymbol{x}_i\colon i\in\omega\}=\mathscr{B}_{\omega}^{\Rsh}(\boldsymbol{F}_{\min})\setminus U_{\alpha_0}$. We identify $\mathfrak{D}(\omega)$ with $\omega$ and define a map $\mathfrak{f}\colon \left(\mathscr{B}_{\omega}^{\Rsh}(\boldsymbol{F}_{\min}),\tau\right)\to \mathfrak{D}(\omega)$ by the formula
\begin{equation*}
  \mathfrak{f}(\boldsymbol{x})=
\left\{
  \begin{array}{ll}
    0, & \hbox{if~} \boldsymbol{x}\in U_{\alpha_0};\\
    i, & \hbox{if~} \boldsymbol{x}=\boldsymbol{x}_i.
  \end{array}
\right.
\end{equation*}
Proposition~\ref{proposition-3.1} implies that such defined map $\mathfrak{f}$ is continuous. Also, the image $\mathfrak{f}(\mathscr{B}_{\omega}^{\Rsh}(\boldsymbol{F}_{\min}))$ is not a compact subset of $\mathfrak{D}(\omega)$, which contradicts the assumption.
\end{proof}

\begin{remark}\label{remark-3.7}
\begin{enumerate}
  \item By Proposition~4 of \cite{Gutik-Mykhalenych=2020} the semigroup $\boldsymbol{B}_{\omega}^{\mathscr{F}}$ contains an isomorphic copy of the semigroup of $\omega\times\omega$-matrix units. Then Theorem~5 from \cite{Gutik-Pavlyk-Reiter=2009} implies that $\boldsymbol{B}_{\omega}^{\mathscr{F}}$ does not embed into a countably compact Hausdorff topological semigroup.
  \item A Hausdorff topological semigroup $S$ is called \emph{$\Gamma$-compact} if for every $x\in S$ the closure of the set
$\{x,x^2,x^3,\ldots\}$ is compact in $S$ (see \cite{Hildebrant-Koch-1986}). The semigroup operation $\boldsymbol{B}_{\omega}^{\mathscr{F}}$ implies that either $a\cdot a=a$ or $a\cdot a=\mathscr{O}$ for any $a\in \boldsymbol{B}_{\omega}^{\mathscr{F}}$. Hence the semigroup $\boldsymbol{B}_{\omega}^{\mathscr{F}}$ with any Hausdorff semigroup topology is $\Gamma$-compact.
\end{enumerate}
\end{remark}

\section{\textbf{On the closure of $\boldsymbol{B}_{\omega}^{\mathscr{F}}$ in a (semi)topological semigroup}}

\begin{lemma}\label{lemma-4.1}
Let $S$ be a dense subsemigroup of a $T_1$-semitopological semigroup $T$ and $0$ be the zero of $S$. Then the element $0$ is the zero of $T$.
\end{lemma}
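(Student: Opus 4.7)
The plan is to prove the contrapositive style contradiction: fix an arbitrary $t\in T$ and show that $0\cdot t=0$ and $t\cdot 0=0$, invoking only that $T$ is $T_1$, that the left and right translations by $0$ are continuous (since $T$ is semitopological), and that $S$ is dense and contains $0$ as an absorbing element.

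First I would handle $0\cdot t=0$. Suppose to the contrary that $0\cdot t=y\neq 0$ for some $t\in T$. Since $T$ is $T_1$, the singleton $\{0\}$ is closed, so $U=T\setminus\{0\}$ is an open neighbourhood of $y$. By continuity of the left translation $\lambda_0\colon T\to T$, $x\mapsto 0\cdot x$, there exists an open neighbourhood $V$ of $t$ in $T$ with $0\cdot V\subseteq U$. Since $S$ is dense in $T$, the set $V\cap S$ is non-empty; pick any $s\in V\cap S$. Because $0$ is the zero of $S$ and $s\in S$, we have $0\cdot s=0$; but $0\cdot s\in 0\cdot V\subseteq T\setminus\{0\}$, a contradiction. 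Hence $0\cdot t=0$ for every $t\in T$.

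The identity $t\cdot 0=0$ is obtained by the symmetric argument using the continuity of the right translation $\rho_0\colon T\to T$, $x\mapsto x\cdot 0$, at the point $t$, together with the same use of $T_1$-separation and density of $S$. Combining the two conclusions shows $0$ absorbs every element of $T$ on both sides, so $0$ is the zero of $T$.

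There is no real obstacle here: the whole argument rests on the standard fact that in a $T_1$ space two points can be separated from each other by a neighbourhood of one that avoids the other, which together with separate continuity of multiplication and density of $S$ forces the algebraic equation $0\cdot s=0$ (valid on the dense subsemigroup $S$) to extend to all of $T$. The only point to be mildly careful about is that one must use \emph{separate} continuity of multiplication, since $T$ is only assumed semitopological; but this is exactly what makes each one-sided translation continuous and is all the argument needs.
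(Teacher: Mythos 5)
Your proof is correct and follows essentially the same route as the paper's: separate continuity of the translation by $0$, a $T_1$-neighbourhood of the putative nonzero product avoiding $0$, and density of $S$ to produce an element $s\in V\cap S$ with $0\cdot s=0$, yielding a contradiction. The only cosmetic difference is that you take $U=T\setminus\{0\}$ directly and argue for all $t\in T$, whereas the paper restricts to $t\in T\setminus S$; both are fine.
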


\begin{proof}[\textsl{Proof}]
Suppose to the contrary that there exists $a\in T\setminus S$ such that $0\cdot a=b\neq0$. Then for every open neighbourhood $U(b)\not\ni 0$ in $T$ there exists an open neighbourhood $V(a)\not\ni 0$ of the point $a$ in $T$ such that $0\cdot V (a)\subseteq U(b)$. But $|V(a)\cap S|\geqslant \omega$, and hence $0\in 0\cdot V (a)\subseteq U(b)$. This contradicts the choice of the neighbourhood $U(b)$. Therefore $0\cdot a=0$ for all $a\in T\setminus S$.

The proof of the equality $a\cdot 0=0$ is similar.
\end{proof}

\begin{theorem}\label{theorem-4.2}
Let $T$ be a $T_1$-semitopological semigroup which contains the semigroup $\boldsymbol{B}_{\omega}^{\mathscr{F}}$ as a dense proper subsemigroup. Then $I=\Big(T\setminus \boldsymbol{B}_{\omega}^{\mathscr{F}}\Big)\cup\{\boldsymbol{0}\}$ is an ideal of $T$.
\end{theorem}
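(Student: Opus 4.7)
The plan: By Lemma~\ref{lemma-4.1}, $\boldsymbol{0}$ is the zero of $T$, so any product involving $\boldsymbol{0}$ is $\boldsymbol{0}\in I$. It therefore suffices to fix $a\in T\setminus\boldsymbol{B}_{\omega}^{\mathscr{F}}$ and $t\in T$ and rule out $a\cdot t\in\boldsymbol{B}_{\omega}^{\mathscr{F}}\setminus\{\boldsymbol{0}\}$; the argument for $t\cdot a$ is symmetric and uses the $A\cdot X=B$ half of Lemma~\ref{lemma-2.7} instead of the $X\cdot A=B$ half. Assume toward contradiction $c:=a\cdot t\in\boldsymbol{B}_{\omega}^{\mathscr{F}}\setminus\{\boldsymbol{0}\}$.

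Three tools drive the argument. First, the subspace topology on $\boldsymbol{B}_{\omega}^{\mathscr{F}}$ inherited from $T$ is shift-continuous and $T_1$, so by Theorem~\ref{theorem-2.4} together with Proposition~\ref{proposition-3.1} every non-zero element of $\boldsymbol{B}_{\omega}^{\mathscr{F}}$ is isolated in this subspace. Hence there is an open $U\subseteq T$ with $U\cap\boldsymbol{B}_{\omega}^{\mathscr{F}}=\{c\}$, and using $T_1$-separation we may further arrange $\boldsymbol{0}\notin U$. Second, density of $\boldsymbol{B}_{\omega}^{\mathscr{F}}$ in the $T_1$-space $T$ forces every open neighbourhood of a point $p\in T\setminus\boldsymbol{B}_{\omega}^{\mathscr{F}}$ to meet $\boldsymbol{B}_{\omega}^{\mathscr{F}}$ in an infinite set: if it met it in a finite set $F$, then $T_1$ would make $V\setminus F$ an open neighbourhood of $p$ disjoint from $\boldsymbol{B}_{\omega}^{\mathscr{F}}$, violating density. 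Third, Lemma~\ref{lemma-2.7} bounds the solutions of one-sided equations in $\boldsymbol{B}_{\omega}^{\mathscr{F}}$.

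I split on $t$. If $t\in\boldsymbol{B}_{\omega}^{\mathscr{F}}$, then necessarily $t\neq\boldsymbol{0}$ (else $c=a\cdot\boldsymbol{0}=\boldsymbol{0}$); separate continuity of right multiplication by $t$ gives an open $V\ni a$ with $V\cdot t\subseteq U$, and by the second tool $V\cap\boldsymbol{B}_{\omega}^{\mathscr{F}}$ is infinite. For each such $x$, the product $x\cdot t$ lies in $U\cap\boldsymbol{B}_{\omega}^{\mathscr{F}}=\{c\}$, producing infinitely many solutions of $X\cdot t=c$ in $\boldsymbol{B}_{\omega}^{\mathscr{F}}$, which contradicts Lemma~\ref{lemma-2.7}. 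If instead $t\in T\setminus\boldsymbol{B}_{\omega}^{\mathscr{F}}$, I apply continuity twice: continuity of $y\mapsto a\cdot y$ at $t$ yields an open $W\ni t$ with $a\cdot W\subseteq U$, and I pick any $y\in W\cap\boldsymbol{B}_{\omega}^{\mathscr{F}}$ (which exists by the second tool); then continuity of $x\mapsto x\cdot y$ at $a$ yields an open $V\ni a$ with $V\cdot y\subseteq U$, and for each of the infinitely many $x\in V\cap\boldsymbol{B}_{\omega}^{\mathscr{F}}$ the product $x\cdot y$ lies in $U\cap\boldsymbol{B}_{\omega}^{\mathscr{F}}=\{c\}$, so $x\cdot y=c$ for infinitely many $x$, contradicting Lemma~\ref{lemma-2.7} again.

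The expected obstacle is precisely the case where both $a$ and $t$ lie outside $\boldsymbol{B}_{\omega}^{\mathscr{F}}$: a single density approximation replaces only one factor by a point of $\boldsymbol{B}_{\omega}^{\mathscr{F}}$, and the other factor still being ``external'' keeps the product outside $\boldsymbol{B}_{\omega}^{\mathscr{F}}$, which blocks any direct appeal to Lemma~\ref{lemma-2.7}. The iterated continuity trick—first approximate $t$ by $y\in\boldsymbol{B}_{\omega}^{\mathscr{F}}$, then vary $a$ through $x\in\boldsymbol{B}_{\omega}^{\mathscr{F}}$—restores a product $x\cdot y$ with both factors in $\boldsymbol{B}_{\omega}^{\mathscr{F}}$, forcing it into the singleton $\{c\}$ and triggering the finite-solution bound of Lemma~\ref{lemma-2.7}.
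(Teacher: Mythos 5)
Your argument is correct and follows essentially the same route as the paper's proof: Lemma~\ref{lemma-4.1} for the zero, isolatedness of non-zero elements of $\boldsymbol{B}_{\omega}^{\mathscr{F}}$ in the subspace topology, density forcing every neighbourhood of an external point to meet $\boldsymbol{B}_{\omega}^{\mathscr{F}}$ in an infinite set, and the finite-solution bound of Lemma~\ref{lemma-2.7} applied to one of the translation equations. The only divergence is the case where both factors lie outside $\boldsymbol{B}_{\omega}^{\mathscr{F}}$: the paper disposes of it by bootstrapping from the already-established mixed case (the product of an external point with the infinitely many interior points of a neighbourhood must land in $I$, so it cannot be a singleton of $\boldsymbol{B}_{\omega}^{\mathscr{F}}\setminus\{\boldsymbol{0}\}$), whereas you iterate separate continuity to replace both factors by elements of $\boldsymbol{B}_{\omega}^{\mathscr{F}}$ and then invoke Lemma~\ref{lemma-2.7} directly; both devices are sound and of comparable length.
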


\begin{proof}[\textsl{Proof}]
Lemma~\ref{lemma-4.1} implies that $\boldsymbol{0}$ is the zero of the semigroup $T$. Since $T$ is a  $T_1$-topological space, the set $\boldsymbol{B}_{\omega}^{\mathscr{F}}\setminus\{\boldsymbol{0}\}$ is dense in $T$.
By Lemma~3 \cite{Gutik-Savchuk=2017}, $\boldsymbol{B}_{\omega}^{\mathscr{F}}\setminus\{\boldsymbol{0}\}$ is an open subspace of  $T$.

Fix an arbitrary non-zero element $y\in I$. If $x\cdot y=z\notin I$ for some $x\in \boldsymbol{B}_{\omega}^{\mathscr{F}}\setminus\{\boldsymbol{0}\}$ then there exists an open neighbourhood $U(y)$ of the point $y$ in the space $T$ such that
\begin{equation*}
\{x\}\cdot U(y)=\{z\}\subset \boldsymbol{B}_{\omega}^{\mathscr{F}}\setminus\{\boldsymbol{0}\}.
\end{equation*}
By Lemma~\ref{lemma-2.7} the open neighbourhood $U(y)$ should contain finitely many elements of the set $\boldsymbol{B}_{\omega}^{\mathscr{F}}\setminus\{\boldsymbol{0}\}$ which contradicts our assumption. Hence $x\cdot y\in I$ for all $x\in \boldsymbol{B}_{\omega}^{\mathscr{F}}\setminus\{\boldsymbol{0}\}$ and $y\in I$. The proof of the statement that $y\cdot x\in I$ for all $x\in \boldsymbol{B}_{\omega}^{\mathscr{F}}\setminus\{\boldsymbol{0}\}$ and $y\in I$ is similar.

Suppose to the contrary that $x\cdot y=w\notin I$ for some non-zero elements $x,y\in I$. Then $w\in \boldsymbol{B}_{\omega}^{\mathscr{F}}\setminus\{\boldsymbol{0}\}$ and the separate continuity of the semigroup operation in $T$ yields open neighbourhoods $U(x)$ and $U(y)$ of the points $x$ and $y$ in the space $T$, respectively, such that $\{x\}\cdot U(y)=\{w\}$ and $U(x)\cdot \{y\}=\{w\}$. Since both neighbourhoods $U(x)$ and $U(y)$ contain infinitely many elements of the set $\boldsymbol{B}_{\omega}^{\mathscr{F}}\setminus\{\boldsymbol{0}\}$,  equalities $\{x\}\cdot U(y)=\{w\}$ and $U(x)\cdot \{y\}=\{w\}$ do not hold, because $\{x\}\cdot \left(U(y)\cap \boldsymbol{B}_{\omega}^{\mathscr{F}}\setminus\{\boldsymbol{0}\}\right)\subseteq I$. The obtained contradiction implies that $x\cdot y\in I$.
\end{proof}

A subset $D$ of a semigroup $S$ is said to be $\omega$-unstable if $D$ is infinite and $aB\cup Ba\nsubseteq D$ for any $a\in D$ and any infinite subset $B\subseteq D$.

\begin{definition}[{\!\!\cite{Gutik-Lawson-Repov=2009}}]\label{definition-4.3}
An \emph{ideal series} (see, for example, \cite{Clifford-Preston-1961, Clifford-Preston-1967}) for a semigroup $S$ is a chain of ideals
\begin{equation*}
I_0\subseteq I_1 \subseteq I_2 \subseteq\cdots\subseteq I_n = S.
\end{equation*}
We call the ideal series \emph{tight} if $I_0$ is a finite set and $D_k=I_k\setminus I_{k-1}$ is an $\omega$-unstable subset for each $k=1,\ldots,n$.
\end{definition}

\begin{lemma}\label{lemma-4.4}
The ideal series $I_0=\{\mathscr{O}\}\subset I_1=\mathscr{B}_{\omega}^{\Rsh}(\boldsymbol{F}_{\min})$ is tight for the semigroup $\mathscr{B}_{\omega}^{\Rsh}(\boldsymbol{F}_{\min})$.
\end{lemma}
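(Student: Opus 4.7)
The plan is to verify the two conditions of Definition~\ref{definition-4.3} for the series $I_0=\{\mathscr{O}\}\subset I_1=\mathscr{B}_{\omega}^{\Rsh}(\boldsymbol{F}_{\min})$. The first condition, that $I_0$ is finite, is immediate. The main task is to show that $D_1 = \mathscr{B}_{\omega}^{\Rsh}(\boldsymbol{F}_{\min})\setminus\{\mathscr{O}\}$ is $\omega$-unstable. Since $D_1$ is obviously infinite (it contains, for instance, every element of the form $(i,0,i)$ with $i\in\omega$), what remains is to prove that for every $a\in D_1$ and every infinite $B\subseteq D_1$, at least one product $a\cdot b$ or $b\cdot a$ with $b\in B$ equals the zero $\mathscr{O}$.

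Fix $a=(\alpha,s_a,\beta)\in D_1$ and suppose, for a contradiction, that $aB\cup Ba\subseteq D_1$ for some infinite $B\subseteq D_1$. The multiplication rule in the Brandt $\omega$-extension yields, for any $b=(\gamma,s_b,\delta)\in D_1$, that $a\cdot b=\mathscr{O}$ unless $\beta=\gamma$, and $b\cdot a=\mathscr{O}$ unless $\delta=\alpha$. Hence every $b\in B$ must simultaneously satisfy $\gamma=\beta$ and $\delta=\alpha$, which forces $B\subseteq \boldsymbol{F}_{\min}^{(\beta,\alpha)_{\Rsh}}$.

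The contradiction is then extracted from Corollary~\ref{corollary-2.2}, which gives an explicit description of the elements of $\mathscr{B}_{\omega}^{\Rsh}(\boldsymbol{F}_{\min})$ and shows that $\boldsymbol{F}_{\min}^{(i,j)_{\Rsh}}$ is finite for every pair $i,j\in\omega$ (the admissible middle coordinate $k$ must satisfy $k\leqslant\min\{i,j\}$ together with $\{k\}\in\mathscr{F}$, and there are only finitely many such $k$). Thus $B$ would have to be finite, contradicting its choice. No serious obstacle is anticipated; the whole argument reduces to reading off when a product in $\mathscr{B}_{\omega}(\boldsymbol{F}_{\min})$ is nonzero and then invoking the finiteness of each $\boldsymbol{F}_{\min}^{(i,j)_{\Rsh}}$ already recorded earlier in the paper.
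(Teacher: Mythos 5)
Your proposal is correct and follows essentially the same route as the paper: both arguments reduce to the Brandt multiplication rule (a product is $\mathscr{O}$ unless the inner indices match) together with the finiteness of each $\boldsymbol{F}_{\min}^{(i,j)_{\Rsh}}$ from Corollary~\ref{corollary-2.2}; you phrase it contrapositively (all products nonzero forces $B\subseteq\boldsymbol{F}_{\min}^{(\beta,\alpha)_{\Rsh}}$, hence $B$ finite), while the paper argues directly that an infinite $B$ must exhibit infinitely many first or second coordinates and hence produce a zero product. The two are logically the same argument.
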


\begin{proof}[\textsl{Proof}]
Fix any infinite subset $D\subseteq \mathscr{B}_{\omega}^{\Rsh}(\boldsymbol{F}_{\min})\setminus \{\mathscr{O}\}$ and any element $a\in \mathscr{B}_{\omega}^{\Rsh}(\boldsymbol{F}_{\min})\setminus \{\mathscr{O}\}$. Since the set $D$ is infinite and the set $\boldsymbol{F}_{\min}^{(i,j)_{\Rsh}}$ is finite for any $i,j\in\omega$, at least one of the following conditions holds:
\begin{itemize}
  \item[$(i)$] there exist infinitely many $i_n\in\omega$ such that $(i_n,k_n,j_n)\in D$ for some $j_n\in \omega$ and $k_n\in \boldsymbol{F}_{\min}$;
  \item[$(ii)$] there exist infinitely many $j_n\in\omega$ such that $(i_n,k_n,j_n)\in D$ for some $i_n\in \omega$ and $k_n\in \boldsymbol{F}_{\min}$.
\end{itemize}
Both above conditions and the semigroup operation of $\mathscr{B}_{\omega}^{\Rsh}(\boldsymbol{F}_{\min})$ imply that $\mathscr{O}\in(i,k,j)\cdot D\cup D\cdot (i,k,j)$, which completes the proof of the lemma.
\end{proof}

Let $\mathfrak{S}$ be a class of semitopological semigroups. A semigroup $S\in\mathfrak{S}$ is called {\it $\mathfrak{S}$-closed}, if $S$ is a closed subsemigroup of any semitopological semigroup $T\in\mathfrak{S}$ which contains $S$ both as a subsemigroup and as a topological space. $\mathscr{H\!T\!S}$-closed topological semigroups, where $\mathscr{H\!T\!S}$ is the class of  Hausdorff topological semigroups, are introduced by Stepp in \cite{Stepp=1969}, and there they were called {\it maximal semigroups}.  An algebraic semigroup $S$ is called {\it algebraically complete in} $\mathfrak{S}$, if $S$ with any Hausdorff topology $\tau$ such that $(S,\tau)\in\mathfrak{S}$ is $\mathfrak{S}$-closed.

By Proposition~10 from \cite{Gutik-Lawson-Repov=2009}, every inverse semigroup $S$ with a tight ideal series is algebraically complete in the class of Hausdorff semitopological inverse semigroups with continuous inversion. Hence Theorem~\ref{theorem-2.4} and Lemma~\ref{lemma-4.4} imply the following theorem.

\begin{theorem}\label{theorem-4.5}
Let $\mathscr{F}$ be a family of atomic subsets of $\omega$. Then the semigroup $\boldsymbol{B}_{\omega}^{\mathscr{F}}$ is algebraically complete in the class of Hausdorff semitopological inverse semigroups with continuous inversion.
\end{theorem}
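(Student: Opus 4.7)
The plan is to reduce the statement to a direct application of Proposition~10 from \cite{Gutik-Lawson-Repov=2009}, which asserts that any inverse semigroup admitting a tight ideal series is algebraically complete in the class of Hausdorff semitopological inverse semigroups with continuous inversion. All the pieces needed for this reduction have already been assembled in the paper, so the argument should amount essentially to a citation chain.

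First I would invoke Theorem~\ref{theorem-2.4} to replace $\boldsymbol{B}_{\omega}^{\mathscr{F}}$ by the isomorphic semigroup $\mathscr{B}_{\omega}^{\Rsh}(\boldsymbol{F}_{\min})$; since algebraic completeness in a class of topological semigroups is obviously preserved by abstract semigroup isomorphism, it suffices to prove the statement for $\mathscr{B}_{\omega}^{\Rsh}(\boldsymbol{F}_{\min})$. Next I would observe that $\mathscr{B}_{\omega}^{\Rsh}(\boldsymbol{F}_{\min})$ is an inverse semigroup: it is defined as a subsemigroup of the Brandt $\omega$-extension $\mathscr{B}_{\omega}(\boldsymbol{F}_{\min})$ of the semilattice $\boldsymbol{F}_{\min}$, which is inverse, and closure under the inversion $(\alpha,s,\beta)^{-1}=(\beta,s,\alpha)$ in $\mathscr{B}_{\omega}^{\Rsh}(\boldsymbol{F}_{\min})$ is immediate from the description of its non-zero elements as triples $(i+k,k,j+k)$ with $\{k\}\in\mathscr{F}$.

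Then I would apply Lemma~\ref{lemma-4.4}, which provides the two-step tight ideal series
\begin{equation*}
I_0=\{\mathscr{O}\}\subset I_1=\mathscr{B}_{\omega}^{\Rsh}(\boldsymbol{F}_{\min}),
\end{equation*}
for $\mathscr{B}_{\omega}^{\Rsh}(\boldsymbol{F}_{\min})$. At this point the hypotheses of Proposition~10 of \cite{Gutik-Lawson-Repov=2009} are satisfied, and that proposition yields algebraic completeness of $\mathscr{B}_{\omega}^{\Rsh}(\boldsymbol{F}_{\min})$ in the class of Hausdorff semitopological inverse semigroups with continuous inversion. Combined with the isomorphism $\mathfrak{f}$ of Theorem~\ref{theorem-2.4}, this gives the same conclusion for $\boldsymbol{B}_{\omega}^{\mathscr{F}}$.

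Because the nontrivial content has been packaged into Lemma~\ref{lemma-4.4} and the cited external proposition, there is no real obstacle here; the only point requiring slight care is the verification that $\mathscr{B}_{\omega}^{\Rsh}(\boldsymbol{F}_{\min})$ really is inverse (so that the external proposition applies), but this is transparent from its construction. The write-up will therefore be just a few lines.
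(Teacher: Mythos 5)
Your proposal is correct and follows exactly the paper's own argument: the paper likewise deduces the theorem from Proposition~10 of \cite{Gutik-Lawson-Repov=2009} combined with Theorem~\ref{theorem-2.4} and the tight ideal series of Lemma~\ref{lemma-4.4}. The only addition is your explicit (and harmless) check that $\mathscr{B}_{\omega}^{\Rsh}(\boldsymbol{F}_{\min})$ is inverse, which the paper leaves implicit.
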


The following lemma describes the closure of the semigroup $\mathscr{B}_{\omega}^{\Rsh}(\boldsymbol{F}_{\min})$ in a $T_1$-topological semigroup.

\begin{lemma}\label{lemma-4.6}
Let $S$ be a $T_1$-topological semigroup which contains the semigroup $\mathscr{B}_{\omega}^{\Rsh}(\boldsymbol{F}_{\min})$ as a dense subsemigroup. Then the following conditions hold:
\begin{itemize}
  \item[$(i)$] if $S\setminus \mathscr{B}_{\omega}^{\Rsh}(\boldsymbol{F}_{\min})\neq\varnothing$ then $x^2=\mathscr{O}$ for all $x\in S\setminus \mathscr{B}_{\omega}^{\Rsh}(\boldsymbol{F}_{\min})$;
  \item[$(ii)$] $E(S)=E(\mathscr{B}_{\omega}^{\Rsh}(\boldsymbol{F}_{\min}))$.
\end{itemize}
\end{lemma}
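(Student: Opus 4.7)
The plan is to establish (i) first and derive (ii) as an immediate consequence. The key structural ingredients I will rely on are: every non-zero element of $\mathscr{B}_{\omega}^{\Rsh}(\boldsymbol{F}_{\min})$ is an isolated point of $S$ (this follows from Proposition~\ref{proposition-3.1} applied to the shift-continuous $T_1$ subspace topology, together with the openness of $\mathscr{B}_{\omega}^{\Rsh}(\boldsymbol{F}_{\min})\setminus\{\mathscr{O}\}$ in $S$, as invoked in the proof of Theorem~\ref{theorem-4.2}); the sets $\boldsymbol{F}_{\min}^{(i,j)_{\Rsh}}$ are finite (Corollary~\ref{corollary-2.2}); and for any $x\in S\setminus\mathscr{B}_{\omega}^{\Rsh}(\boldsymbol{F}_{\min})$, every open neighbourhood $U(x)$ satisfies $|U(x)\cap\mathscr{B}_{\omega}^{\Rsh}(\boldsymbol{F}_{\min})|\geqslant\omega$ (otherwise, removing the finitely many points of the intersection, each closed by $T_1$-separation, would yield an open set about $x$ disjoint from the dense subsemigroup).

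For (i), fix $x\in S\setminus\mathscr{B}_{\omega}^{\Rsh}(\boldsymbol{F}_{\min})$ and suppose, towards a contradiction, that $y=x\cdot x\neq\mathscr{O}$. By Theorem~\ref{theorem-4.2} the element $y$ lies in the ideal $I$, so in fact $y\in S\setminus\mathscr{B}_{\omega}^{\Rsh}(\boldsymbol{F}_{\min})$. The $T_1$-set $V(y)=S\setminus\{\mathscr{O}\}$ is an open neighbourhood of $y$ avoiding $\mathscr{O}$, and joint continuity of the multiplication at $(x,x)$ produces an open neighbourhood $U(x)$ with $U(x)\cdot U(x)\subseteq V(y)$. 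By the observation above, the set $A=U(x)\cap(\mathscr{B}_{\omega}^{\Rsh}(\boldsymbol{F}_{\min})\setminus\{\mathscr{O}\})$ is infinite, and for any $a=(i_1,s_1,j_1)$ and $b=(i_2,s_2,j_2)$ in $A$ one has $a\cdot b\neq\mathscr{O}$. In the Brandt structure this forces $j_1=i_2$ for every such pair; taking $a=b$ gives $i_1=j_1$, so every element of $A$ has the form $(i,s,i)$, and then comparing two such elements forces the first coordinate to be a common $i\in\omega$. Consequently $A\subseteq\boldsymbol{F}_{\min}^{(i,i)_{\Rsh}}$, which is finite by Corollary~\ref{corollary-2.2}, contradicting the infinitude of $A$.

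For (ii), the inclusion $E(\mathscr{B}_{\omega}^{\Rsh}(\boldsymbol{F}_{\min}))\subseteq E(S)$ is immediate from the fact that $\mathscr{B}_{\omega}^{\Rsh}(\boldsymbol{F}_{\min})$ is a subsemigroup. For the reverse, if $e\in E(S)\setminus\mathscr{B}_{\omega}^{\Rsh}(\boldsymbol{F}_{\min})$, then by (i) we would have $e=e\cdot e=\mathscr{O}\in\mathscr{B}_{\omega}^{\Rsh}(\boldsymbol{F}_{\min})$, which is absurd; hence $E(S)\subseteq \mathscr{B}_{\omega}^{\Rsh}(\boldsymbol{F}_{\min})$, proving the equality.

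The main obstacle is the squeeze in (i): I need both that a hypothetical non-zero square $y$ admits a neighbourhood avoiding $\mathscr{O}$ (uses $T_1$) and that the full joint continuity of multiplication lets one lift this to $U(x)\cdot U(x)$ being zero-free, so that the Brandt index-matching rules can be applied pairwise to the infinite trace on the dense subsemigroup. This is where the hypothesis that $S$ is a topological (not merely semitopological) semigroup is essential; separate continuity would give only $\{a\}\cdot U(x)\subseteq V(y)$ for each individual $a$, insufficient to force both $i_1=j_1$ for every element and a common first coordinate across $A$.
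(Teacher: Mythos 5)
Your proof is correct and follows essentially the same route as the paper's: separate a putative non-zero square $y=x^2$ from $\mathscr{O}$ using the $T_1$ axiom, use joint continuity to obtain $U(x)\cdot U(x)$ avoiding $\mathscr{O}$, and then use the finiteness of the sets $\boldsymbol{F}_{\min}^{(i,j)_{\Rsh}}$ to force a zero product inside the infinite trace of $U(x)$ on the dense subsemigroup, with $(ii)$ read off from $(i)$. You merely spell out the pairwise index-matching step that the paper compresses into ``the definition of the semigroup operation implies that $\mathscr{O}\in V(x)\cdot V(x)$''.
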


\begin{proof}[\textsl{Proof}]
$(i)$ By Lemma~\ref{lemma-4.1} the element $\mathscr{O}$ is the zero of the semigroup $S$. Suppose to the contrary that there exists $x\in S\setminus \mathscr{B}_{\omega}^{\Rsh}(\boldsymbol{F}_{\min})$ such that $x^2=y\neq\mathscr{O}$. Since $S$ is a $T_1$-space there exists an open neighbourhood $U(y)$ of the point $y$ in $S$ such that $\mathscr{O}\notin U(y)$. The continuity of the semigroup operation in $S$ implies that there exists an open neighbourhood $V(x)$ of the point $x$ in the space $S$ such that $V(x)\cdot V(x)\subseteq U(y)$. By Corollary~\ref{corollary-2.2} the set $\boldsymbol{F}_{\min}^{(i,j)_{\Rsh}}$ is finite for any $i,j\in\omega$. Since the set $V(x)\cap \mathscr{B}_{\omega}^{\Rsh}(\boldsymbol{F}_{\min})$ is infinite, the above arguments and the definition of the semigroup operation in $\mathscr{B}_{\omega}^{\Rsh}(\boldsymbol{F}_{\min})$ imply that $\mathscr{O}\in V(x)\cdot V(x)\subseteq U(y)$, a contradiction.

\smallskip

Statement $(ii)$ follows from $(i)$.
\end{proof}

\begin{lemma}\label{lemma-4.7}
Let $\mathscr{B}_{\omega}^{\Rsh}(\boldsymbol{F}_{\min})$ be a Hausdorff topological semigroup with the compact band $E(\mathscr{B}_{\omega}^{\Rsh}(\boldsymbol{F}_{\min}))$. If a Hausdorff topological semigroup $S$ contains $\mathscr{B}_{\omega}^{\Rsh}(\boldsymbol{F}_{\min})$ as a subsemigroup then $\mathscr{B}_{\omega}^{\Rsh}(\boldsymbol{F}_{\min})$ is a closed subset of $S$.
\end{lemma}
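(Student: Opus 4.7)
The plan is to argue by contradiction. Assume $\mathscr{B}_{\omega}^{\Rsh}(\boldsymbol{F}_{\min})$ is not closed in $S$ and pick $y\in\overline{\mathscr{B}_{\omega}^{\Rsh}(\boldsymbol{F}_{\min})}\setminus\mathscr{B}_{\omega}^{\Rsh}(\boldsymbol{F}_{\min})$; since $\mathscr{O}\in\mathscr{B}_{\omega}^{\Rsh}(\boldsymbol{F}_{\min})$, necessarily $y\ne\mathscr{O}$. Take a net $(y_\alpha)_{\alpha\in\Lambda}$ in $\mathscr{B}_{\omega}^{\Rsh}(\boldsymbol{F}_{\min})\setminus\{\mathscr{O}\}$ with $y_\alpha\to y$ in $S$, and write $y_\alpha=(p_\alpha,k_\alpha,q_\alpha)$. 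The key observation is that each such element is ``sandwiched'' between two idempotents of its own $\mathscr{H}$-class: a direct Brandt computation shows that, setting $e_\alpha=(p_\alpha,k_\alpha,p_\alpha)$ and $f_\alpha=(q_\alpha,k_\alpha,q_\alpha)$, both $e_\alpha,f_\alpha$ belong to $E:=E(\mathscr{B}_{\omega}^{\Rsh}(\boldsymbol{F}_{\min}))$ and $e_\alpha y_\alpha f_\alpha=y_\alpha$.

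Because $E$ is compact and Hausdorff (as a subspace of the Hausdorff space $S$), an iterated passage to subnets — first forcing $e_\alpha\to e$, then a further subnet forcing $f_\alpha\to f$ — produces some $e,f\in E$ while the surviving subnet of $(y_\alpha)$ still converges to $y$. Joint continuity of the multiplication on $S$ then gives $y_\alpha=e_\alpha y_\alpha f_\alpha\to eyf$, so by Hausdorffness $y=eyf$.

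It remains to show that $e\cdot\mathscr{B}_{\omega}^{\Rsh}(\boldsymbol{F}_{\min})\cdot f$ is a \emph{finite} subset of $\mathscr{B}_{\omega}^{\Rsh}(\boldsymbol{F}_{\min})$ for every $e,f\in E$. Writing $e=(p,k,p)$ and $f=(q,k',q)$ (the case where one of the factors is $\mathscr{O}$ being trivial), the product $e\cdot(a,b,c)\cdot f$ in $\mathscr{B}_{\omega}(\boldsymbol{F}_{\min})$ equals $\mathscr{O}$ unless $a=p$ and $c=q$, in which case it equals $(p,\min\{k,b,k'\},q)$; as $b$ varies, the middle coordinate takes only values in the finite set $\boldsymbol{F}\cap\{0,1,\ldots,\min\{k,k'\}\}$. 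Hence $(ey_\alpha f)_\alpha$ is a net in a finite — therefore closed — subset of the Hausdorff space $S$; its limit $eyf=y$ must lie in this finite subset, so $y\in\mathscr{B}_{\omega}^{\Rsh}(\boldsymbol{F}_{\min})$, contradicting the choice of $y$. The most delicate bookkeeping step is the iterated subnet extraction that simultaneously secures convergence of $(e_\alpha)$, $(f_\alpha)$, and (the resulting subnet of) $(y_\alpha)$; the rest reduces to direct Brandt-type computations and the compactness of $E$.
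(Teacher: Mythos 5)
Your argument is correct, and it takes a genuinely different route from the paper's. The paper first reduces to the case where $\mathscr{B}_{\omega}^{\Rsh}(\boldsymbol{F}_{\min})$ is dense in $S$, fixes an outside point $x$, separates it from the zero $\mathscr{O}$ by disjoint neighbourhoods $U(x)$ and $U(\mathscr{O})$, and then uses compactness of the band together with Proposition~\ref{proposition-3.1} to conclude that all but finitely many idempotents lie in a smaller neighbourhood $V(\mathscr{O})$; since $V(x)$ meets infinitely many of the finite sets $\boldsymbol{F}_{\min}^{(i,j)_{\Rsh}}$, some $(i,k,j)\in V(x)$ has its left or right unit $(i,k,i)$ or $(j,k,j)$ in $V(\mathscr{O})$, so that $(i,k,j)$ lands in $V(\mathscr{O})\cdot V(x)\subseteq U(\mathscr{O})$ while also lying in $U(x)$ --- contradicting disjointness. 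You exploit the same two structural facts, namely the sandwich identity $e_\alpha y_\alpha f_\alpha=y_\alpha$ and the finiteness of the sets $\boldsymbol{F}_{\min}^{(i,j)_{\Rsh}}$, but you package the compactness of $E(\mathscr{B}_{\omega}^{\Rsh}(\boldsymbol{F}_{\min}))$ as subnet extraction rather than as ``cofinitely many idempotents lie in $V(\mathscr{O})$''. This buys two simplifications: you need neither the reduction to the dense case (nor Lemma~\ref{lemma-4.1}) nor the isolatedness of nonzero points, and you need not care whether the limits $e,f$ equal $\mathscr{O}$, since $e\cdot\mathscr{B}_{\omega}^{\Rsh}(\boldsymbol{F}_{\min})\cdot f$ is finite (hence closed in the Hausdorff space $S$) in every case, which forces $y=eyf$ back into the subsemigroup. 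The paper's version, in exchange, avoids nets and subnets entirely and makes explicit where Hausdorffness of $S$ enters. Both arguments rely essentially on joint continuity of multiplication (you for the convergence $e_\alpha y_\alpha f_\alpha\to eyf$, the paper for $V(x)\cdot V(\mathscr{O})\subseteq U(\mathscr{O})$), so neither would survive with only separate continuity. Your bookkeeping of the iterated subnet extraction is the standard one and is sound.
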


\begin{proof}[\textsl{Proof}]
Suppose to the contrary that there exists a Hausdorff topological semigroup $S$ which contains $\mathscr{B}_{\omega}^{\Rsh}(\boldsymbol{F}_{\min})$ as a non-closed subsemigroup. Since the closure of a subsemigroup of $S$ is again a subsemigroup in $S$ (see \cite[page~9]{Carruth-Hildebrant-Koch-1983}), without loss of generality we may assume that $\mathscr{B}_{\omega}^{\Rsh}(\boldsymbol{F}_{\min})$ is a dense subsemigroup of $S$ and $S\setminus \mathscr{B}_{\omega}^{\Rsh}(\boldsymbol{F}_{\min})\neq\varnothing$. By Lemma~\ref{lemma-4.1} the element $\mathscr{O}$ is the zero of $S$.

Fix an arbitrary $x\in S\setminus \mathscr{B}_{\omega}^{\Rsh}(\boldsymbol{F}_{\min})$. By Hausdorffness of $S$ there exist open neighbourhoods $U(x)$ and $U(\mathscr{O})$ of the points $x$ and $\mathscr{O}$ in $S$, respectively, such that $U(x)\cap U(\mathscr{O})=\varnothing$. Since $x\cdot \mathscr{O}=\mathscr{O}\cdot x=\mathscr{O}$, there exist open neighbourhoods $V(x)$ and $V(\mathscr{O})$ of the points $x$ and $\mathscr{O}$ in the space $S$, respectively, such that
\begin{equation*}
  V(x)\cdot V(\mathscr{O})\subseteq U(\mathscr{O}), \qquad V(\mathscr{O})\cdot V(x)\subseteq U(\mathscr{O}), \qquad V(x)\subseteq U(x) \quad \hbox{and} \quad V(\mathscr{O})\subseteq U(\mathscr{O}).
\end{equation*}
The compactness of $E(\mathscr{B}_{\omega}^{\Rsh}(\boldsymbol{F}_{\min}))$ and Proposition~\ref{proposition-3.1} imply that the set $E(\mathscr{B}_{\omega}^{\Rsh}(\boldsymbol{F}_{\min}))\setminus V(\mathscr{O})$ is finite. Also, by Corollary~\ref{corollary-2.2} the set $\boldsymbol{F}_{\min}^{(i,j)_{\Rsh}}$ is finite for any $i,j\in\omega$. Since the set $V(x)\cap \mathscr{B}_{\omega}^{\Rsh}(\boldsymbol{F}_{\min})$ is infinite, the above arguments and the definition of the semigroup operation in $\mathscr{B}_{\omega}^{\Rsh}(\boldsymbol{F}_{\min})$ imply that there exists $(i,k,j)\in V(x)$ such that  $(i,k,i)\in V(\mathscr{O})$ or $(j,k,j)\in V(\mathscr{O})$. Therefore, we have that at least one of the following conditions holds:
\begin{equation*}
  (V(x)\cdot V(\mathscr{O}))\cap V(x)\neq\varnothing, \qquad (V(\mathscr{O})\cdot V(x))\cap V(x)\neq\varnothing.
\end{equation*}
Since $V(x)\subseteq U(x)$, this contradicts the assumption $U(x)\cap U(\mathscr{O})=\varnothing$. The obtained contradiction implies
the statement of the lemma.
\end{proof}

Later by $\mathscr{H\!T\!S}$ we denote the class of all Hausdorff topological semigroups.

The following lemma shows that the converse statement to Lemma~\ref{lemma-4.7} is true in the case when $\mathscr{B}_{\omega}^{\Rsh}(\boldsymbol{F}_{\min})$ is a topological inverse semigroup.

\begin{lemma}\label{lemma-4.8}
Let $(\mathscr{B}_{\omega}^{\Rsh}(\boldsymbol{F}_{\min}),\tau)$ be a Hausdorff topological inverse semigroup. If \linebreak $(\mathscr{B}_{\omega}^{\Rsh}(\boldsymbol{F}_{\min}),\tau)$ is an $\mathscr{H\!T\!S}$-closed topological semigroup then the band $E(\mathscr{B}_{\omega}^{\Rsh}(\boldsymbol{F}_{\min}))$ is compact.
\end{lemma}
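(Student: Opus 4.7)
The plan is to argue by contradiction. Suppose the band $E=E(\mathscr{B}_{\omega}^{\Rsh}(\boldsymbol{F}_{\min}))$ is not compact; I will produce a Hausdorff topological semigroup $T$ that contains $\mathscr{B}_{\omega}^{\Rsh}(\boldsymbol{F}_{\min})$ as a proper dense subsemigroup, contradicting $\mathscr{H\!T\!S}$-closedness.

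First, I would extract diverging idempotents and lift them to non-idempotents. Since every non-zero element of $\mathscr{B}_{\omega}^{\Rsh}(\boldsymbol{F}_{\min})$ is isolated in $(\mathscr{B}_{\omega}^{\Rsh}(\boldsymbol{F}_{\min}),\tau)$ by Proposition~\ref{proposition-3.1}, failure of compactness of $E$ provides a $\tau$-neighbourhood $U_0$ of $\mathscr{O}$ whose complement in $E$ is infinite; using that each $\boldsymbol{F}_{\min}^{(i,i)_{\Rsh}}$ is finite (Corollary~\ref{corollary-2.2}), I pick distinct $i_n\in\omega$ with $e_n=(i_n,k_n,i_n)\notin U_0$. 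I thin the sequence so that $I=\{i_n:n\in\omega\}$ is coinfinite in $\omega$, then choose pairwise distinct $M_n\in\omega\setminus I$ satisfying $M_n\ge k_n$, and set
\[
b_n=(i_n,k_n,M_n)\in\mathscr{B}_{\omega}^{\Rsh}(\boldsymbol{F}_{\min}).
\]
Each $b_n$ is non-idempotent (since $M_n\notin I$ and $i_n\in I$), satisfies $b_n\cdot b_n^{-1}=e_n$, and the $b_n$'s are pairwise orthogonal: $b_n\cdot b_m=\mathscr{O}$ for all $n,m$ (again because $M_n\notin I$). Because $(\mathscr{B}_{\omega}^{\Rsh}(\boldsymbol{F}_{\min}),\tau)$ is a topological inverse semigroup, joint continuity of $\cdot$ together with continuity of $^{-1}$ and $e_n\not\to\mathscr{O}$ force $b_n\not\to\mathscr{O}$; passing to a further subsequence I arrange $b_n\notin U^*$ for every $n$ and some $\tau$-neighbourhood $U^*$ of $\mathscr{O}$.

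Next, I adjoin a single new point. Put $T=\mathscr{B}_{\omega}^{\Rsh}(\boldsymbol{F}_{\min})\sqcup\{z\}$, extend multiplication by $z\cdot z=z\cdot a=a\cdot z=\mathscr{O}$ for all $a\in\mathscr{B}_{\omega}^{\Rsh}(\boldsymbol{F}_{\min})$, and topologise $T$ by keeping $\tau$ on $\mathscr{B}_{\omega}^{\Rsh}(\boldsymbol{F}_{\min})$ and declaring $\{\{z\}\cup\{b_n:n\ge N\}:N\in\omega\}$ a local base at $z$. Hausdorffness is immediate: $U^*$ and $\{z\}\cup\{b_n:n\in\omega\}$ separate $\mathscr{O}$ from $z$, while every other pair is separated by the isolatedness of non-zero elements. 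For continuity of the multiplication, the only cases that need checking are those involving $z$: for $V=\{z\}\cup\{b_n:n\ge N\}$, orthogonality gives $V\cdot V=\{\mathscr{O}\}$; for a fixed $a=(p,q,r)\in\mathscr{B}_{\omega}^{\Rsh}(\boldsymbol{F}_{\min})\setminus\{\mathscr{O}\}$, distinctness of the $M_n$ (respectively of the $i_n$) forces $b_n\cdot a=\mathscr{O}$ (respectively $a\cdot b_n=\mathscr{O}$) for all but at most one $n$, so a sufficiently large $N$ yields $V\cdot\{a\}=\{\mathscr{O}\}$ (respectively $\{a\}\cdot V=\{\mathscr{O}\}$). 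Thus $T$ is a Hausdorff topological semigroup and $b_n\to z$ witnesses that $\mathscr{B}_{\omega}^{\Rsh}(\boldsymbol{F}_{\min})$ is not closed in $T$.

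The main obstacle is the combinatorial step of choosing the sequence $(b_n)$: the $M_n$'s must lie outside $I$ to guarantee pairwise orthogonality of the $b_n$, must be pairwise distinct to kill almost all of the mixed products $b_n\cdot a$ and $a\cdot b_n$, and must satisfy $M_n\ge k_n$ so that $b_n$ actually belongs to $\mathscr{B}_{\omega}^{\Rsh}(\boldsymbol{F}_{\min})$; at the same time $b_n\cdot b_n^{-1}=e_n$ has to stay away from $\mathscr{O}$. The coinfinite thinning of $I$ is what makes all of these constraints simultaneously achievable.
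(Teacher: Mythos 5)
Your overall strategy is the same as the paper's: assuming the band is not compact, extract idempotents $e_n=(i_n,k_n,i_n)$ escaping from a neighbourhood of $\mathscr{O}$, replace them by pairwise orthogonal non-idempotents $b_n$ with $b_nb_n^{-1}=e_n$, adjoin a single new point $z$ acting as a second annihilator, and declare the tails $\{z\}\cup\{b_n\colon n\ge N\}$ a base at $z$; continuity of $\boldsymbol{x}\mapsto\boldsymbol{x}\boldsymbol{x}^{-1}$ is what keeps the $b_n$ away from $\mathscr{O}$. The paper realizes the $b_n$ as $(i_{2j-1},k_{2j-1},i_{2j})$ by pairing consecutive escaping idempotents, whereas you use fresh columns $M_n\notin I$; both choices yield pairwise orthogonality and the property that a fixed non-zero element has at most one non-zero product with the $b_n$'s, so this difference is cosmetic.

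There is, however, a genuine gap: your continuity check for the multiplication of $T$ treats the pairs $(z,z)$, $(z,a)$ and $(a,z)$ only for $a\ne\mathscr{O}$, and omits the pairs $(z,\mathscr{O})$ and $(\mathscr{O},z)$, which is precisely the delicate case. Unless $\mathscr{O}$ is isolated, every $\tau$-neighbourhood $W'$ of $\mathscr{O}$ contains infinitely many non-zero elements $(a,c,b)$; whenever $b=i_n$ the product $(a,c,b)\cdot b_n=(a,\min\{c,k_n\},M_n)$ is non-zero, and nothing in your construction forces it into a prescribed target neighbourhood $W$ of $\mathscr{O}$ (symmetrically for $b_n\cdot(a,c,b)$ when $a=M_n$). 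You cannot in general shrink $W'$ to avoid all elements with third coordinate in $\{i_n\colon n\ge N\}$, since a neighbourhood of $\mathscr{O}$ may be forced to meet every set $\boldsymbol{F}_{\min}^{(i,j)_{\Rsh}}$ with $i,j$ large (compare Example~\ref{example-4.11}). This is exactly where the paper uses the topological-inverse-semigroup hypothesis a second time: the maps $\varphi(\boldsymbol{x})=\boldsymbol{x}\boldsymbol{x}^{-1}$ and $\psi(\boldsymbol{x})=\boldsymbol{x}^{-1}\boldsymbol{x}$ are continuous, so $\mathcal{I}_M=\varphi^{-1}(M)\cup\psi^{-1}(M)$ is closed and misses $\mathscr{O}$; the paper places its approximating sequence inside $\mathcal{I}_M$ and restricts the neighbourhoods of $\mathscr{O}$ appearing in the continuity check to the complement of $\mathcal{I}_M$. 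You invoke the hypothesis only to conclude $b_n\not\to\mathscr{O}$, which is strictly weaker, so as written your argument does not establish that $T$ is a topological semigroup; you need to supply the inclusions $V\cdot W'\subseteq W$ and $W'\cdot V\subseteq W$ along these lines.
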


\begin{proof}[\textsl{Proof}]
Suppose to the contrary that there exists a Hausdorff semigroup inverse topology $\tau$ on the semigroup $\mathscr{B}_{\omega}^{\Rsh}(\boldsymbol{F}_{\min})$ such that $(\mathscr{B}_{\omega}^{\Rsh}(\boldsymbol{F}_{\min}),\tau)$ is an $\mathscr{H\!T\!S}$-closed topological semigroup and the band $E(\mathscr{B}_{\omega}^{\Rsh}(\boldsymbol{F}_{\min}))$ is not compact. By Proposition~\ref{proposition-3.1} every non-zero element of $\mathscr{B}_{\omega}^{\Rsh}(\boldsymbol{F}_{\min})$ is an isolated point in $\left(\mathscr{B}_{\omega}^{\Rsh}(\boldsymbol{F}_{\min}),\tau\right)$ and hence there exists an open neighbourhood $V(\mathscr{O})$ of the zero $\mathscr{O}$ in the space $\left(\mathscr{B}_{\omega}^{\Rsh}(\boldsymbol{F}_{\min}),\tau\right)$ such that $M=E(\mathscr{B}_{\omega}^{\Rsh}(\boldsymbol{F}_{\min}))\setminus V(\mathscr{O})$ is an infinite subset of the band $E(\mathscr{B}_{\omega}^{\Rsh}(\boldsymbol{F}_{\min}))$. Since the semigroup $\mathscr{B}_{\omega}^{\Rsh}(\boldsymbol{F}_{\min})$ is countable, so is the set $M$. Next we enumerate elements of the set $M$ by positive integers:
\begin{equation*}
  M=\{(i_n,k_n,i_n)\colon n=1,2,3,\ldots\}.
\end{equation*}
By Corollary~\ref{corollary-2.2} the set $\boldsymbol{F}_{\min}^{(i,j)_{\Rsh}}$ is finite for any $i,j\in\omega$, and hence without loss of generality we may assume that $i_m< i_n$ for any positive integers $m<n$. Since $(\mathscr{B}_{\omega}^{\Rsh}(\boldsymbol{F}_{\min}),\tau)$ is a  topological inverse semigroup the maps $\varphi\colon \mathscr{B}_{\omega}^{\Rsh}(\boldsymbol{F}_{\min})\to E(\mathscr{B}_{\omega}^{\Rsh}(\boldsymbol{F}_{\min}))$ and $\psi\colon \mathscr{B}_{\omega}^{\Rsh}(\boldsymbol{F}_{\min})\to E(\mathscr{B}_{\omega}^{\Rsh}(\boldsymbol{F}_{\min}))$ defined by the formulae $\varphi(\boldsymbol{x})=\boldsymbol{x}\cdot\boldsymbol{x}^{-1}$ and $\psi(\boldsymbol{x})=\boldsymbol{x}^{-1}\cdot \boldsymbol{x}$, respectively, are continuous, and hence  $\mathcal{I}_M=\varphi^{-1}(M)\cup\psi^{-1}(M)$ is a closed subset in the topological space $(\mathscr{B}_{\omega}^{\Rsh}(\boldsymbol{F}_{\min}),\tau)$.

Let $\boldsymbol{y}\notin \mathscr{B}_{\omega}^{\Rsh}(\boldsymbol{F}_{\min})$. Put $S=\mathscr{B}_{\omega}^{\Rsh}(\boldsymbol{F}_{\min})\cup \{\boldsymbol{y}\}$. We extend the semigroup operation from $\mathscr{B}_{\omega}^{\Rsh}(\boldsymbol{F}_{\min})$ onto $S$ as follows:
\begin{equation*}
  \boldsymbol{y}\cdot \boldsymbol{y}=\boldsymbol{y}\cdot\boldsymbol{x}=\boldsymbol{x}\cdot\boldsymbol{y}=\mathscr{O}, \qquad \hbox{for all} \quad \boldsymbol{x}\in \mathscr{B}_{\omega}^{\Rsh}(\boldsymbol{F}_{\min}).
\end{equation*}
Simple verifications show that so extended binary operation is associative.

We put
\begin{equation*}
M_n=\left\{(i_{2j-1},k_{2j-1}, i_{2j})\colon j=n,n+1,n+2,\ldots\right\}
\end{equation*}
for any positive integer $n$. We define a topology $\tau_S$ on $S$ in the following way:
\begin{itemize}
  \item[$(i)$] for every $\boldsymbol{x}\in \mathscr{B}_{\omega}^{\Rsh}(\boldsymbol{F}_{\min})$ the bases of topologies $\tau$ and $\tau_S$ at the point $\boldsymbol{x}$ coincide; \quad and
  \item[$(ii)$] the family $\mathscr{B}=\left\{U_n(\boldsymbol{y})=\{\boldsymbol{y}\}\cup M_n\colon n=1,2,3,\ldots\right\}$ is the base of the topology $\tau_S$ at the point $\boldsymbol{y}$.
\end{itemize}
Since $M_n\subset \mathcal{I}_M$ for any positive integer $n$, $\tau_S$ is a Hausdorff topology on $S$.

For any open neighbourhood $V(\mathscr{O})$ of the zero $\mathscr{O}$ such that $V(\mathscr{O})\subseteq U(\mathscr{O})$ and any positive integer $n$ we have that
\begin{equation*}
  V(\mathscr{O})\cdot U_n(\boldsymbol{y})=U_n(\boldsymbol{y})\cdot V(\mathscr{O})=U_n(\boldsymbol{y})\cdot U_n(\boldsymbol{y})=\{\mathscr{O}\}\subseteq V(\mathscr{O}).
\end{equation*}
We remark that the definition of the set $M_n$ implies that for any non-zero element $(i,k,j)$ of the semigroup $\mathscr{B}_{\omega}^{\Rsh}(\boldsymbol{F}_{\min})$ there exists the smallest positive integer $n_{(i,k,j)}$ such that
\begin{equation*}
  (i,k,j)\cdot M_{n_{(i,k,j)}}=M_{n_{(i,k,j)}}\cdot (i,k,j)=\{\mathscr{O}\}.
\end{equation*}
This implies that
\begin{equation*}
  (i,k,j)\cdot U_{n_{(i,k,j)}}(\boldsymbol{y})=U_{n_{(i,k,j)}}(\boldsymbol{y})\cdot (i,k,j)=\{\mathscr{O}\}\subseteq V(\mathscr{O}).
\end{equation*}
Therefore $(S,\tau_S)$ is a Hausdorff topological semigroup which contains $(\mathscr{B}_{\omega}^{\Rsh}(\boldsymbol{F}_{\min}),\tau)$ as a proper dense subsemigroup, which contradicts the assumption of the lemma. The obtained contradiction implies that the band $E(\mathscr{B}_{\omega}^{\Rsh}(\boldsymbol{F}_{\min}))$ is compact.
\end{proof}

The proof of Lemma~\ref{lemma-4.8} implies Proposition~\ref{proposition-4.9}, which gives the sufficient conditions on the topological semigroup $(\mathscr{B}_{\omega}^{\Rsh}(\boldsymbol{F}_{\min}),\tau)$ to be non-$\mathscr{H\!T\!S}$-closed.

\begin{proposition}\label{proposition-4.9}
Let $\tau$ be a semigroup topology on the semigroup $\mathscr{B}_{\omega}^{\Rsh}(\boldsymbol{F}_{\min})$. Let $\varphi\colon \mathscr{B}_{\omega}^{\Rsh}(\boldsymbol{F}_{\min})\to E(\mathscr{B}_{\omega}^{\Rsh}(\boldsymbol{F}_{\min}))$ and $\psi\colon \mathscr{B}_{\omega}^{\Rsh}(\boldsymbol{F}_{\min})\to E(\mathscr{B}_{\omega}^{\Rsh}(\boldsymbol{F}_{\min}))$ be the maps which are defined by the formulae $\varphi(\boldsymbol{x})=\boldsymbol{x}\cdot\boldsymbol{x}^{-1}$ and $\psi(\boldsymbol{x})=\boldsymbol{x}^{-1}\cdot \boldsymbol{x}$. If there exists an open neighbourhood $U(\mathscr{O})$ of zero in $(\mathscr{B}_{\omega}^{\Rsh}(\boldsymbol{F}_{\min}),\tau)$ such that
\begin{equation*}
\left(\varphi^{-1}(M)\cup\psi^{-1}(M)\right)\cap U(\mathscr{O})=\varnothing
\end{equation*}
for some infinite subset $M$ of the band $E(\mathscr{B}_{\omega}^{\Rsh}(\boldsymbol{F}_{\min}))$, then $(\mathscr{B}_{\omega}^{\Rsh}(\boldsymbol{F}_{\min}),\tau)$ is not an $\mathscr{H\!T\!S}$-closed topological semigroup.
\end{proposition}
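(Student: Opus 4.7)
The plan is to construct a proper Hausdorff topological super-semigroup $(S,\tau_S)$ in which $(\mathscr{B}_{\omega}^{\Rsh}(\boldsymbol{F}_{\min}),\tau)$ sits as a dense subsemigroup, thereby witnessing that the latter is not $\mathscr{H\!T\!S}$-closed. The construction will exactly mirror the one carried out in the proof of Lemma~\ref{lemma-4.8}; the role played there by the closedness of $\mathcal{I}_M:=\varphi^{-1}(M)\cup\psi^{-1}(M)$ (which was deduced from the continuity of inversion in a topological inverse semigroup) will be replaced by the given hypothesis $\mathcal{I}_M\cap U(\mathscr{O})=\varnothing$, which is all that is really needed in order to separate $\mathscr{O}$ from the adjoined point.

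First I enumerate $M=\{(i_n,k_n,i_n)\colon n=1,2,3,\ldots\}$ so that $i_1<i_2<\cdots$; this is possible because by Corollary~\ref{corollary-2.2} each set $\boldsymbol{F}_{\min}^{(i,j)_{\Rsh}}$ is finite, so only finitely many triples of $M$ can share a common first coordinate. Setting $M_n=\{(i_{2j-1},k_{2j-1},i_{2j})\colon j\geqslant n\}$, the chain of inequalities $i_{2j}>i_{2j-1}\geqslant k_{2j-1}$ shows that each such triple lies in $\mathscr{B}_{\omega}^{\Rsh}(\boldsymbol{F}_{\min})$; moreover $\varphi((i_{2j-1},k_{2j-1},i_{2j}))=(i_{2j-1},k_{2j-1},i_{2j-1})\in M$, whence $M_n\subseteq\varphi^{-1}(M)\subseteq\mathcal{I}_M$. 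I then pick $\boldsymbol{y}\notin\mathscr{B}_{\omega}^{\Rsh}(\boldsymbol{F}_{\min})$, set $S=\mathscr{B}_{\omega}^{\Rsh}(\boldsymbol{F}_{\min})\cup\{\boldsymbol{y}\}$, and extend the multiplication by $\boldsymbol{y}\cdot\boldsymbol{y}=\boldsymbol{y}\cdot\boldsymbol{x}=\boldsymbol{x}\cdot\boldsymbol{y}=\mathscr{O}$ for every $\boldsymbol{x}\in\mathscr{B}_{\omega}^{\Rsh}(\boldsymbol{F}_{\min})$; associativity is immediate because any triple product involving $\boldsymbol{y}$ collapses to $\mathscr{O}$ on both sides. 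The topology $\tau_S$ retains $\tau$ at each point of $\mathscr{B}_{\omega}^{\Rsh}(\boldsymbol{F}_{\min})$ and uses $\{\{\boldsymbol{y}\}\cup M_n\colon n=1,2,3,\ldots\}$ as a neighbourhood base at $\boldsymbol{y}$.

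It then remains to verify that $(S,\tau_S)$ is a Hausdorff topological semigroup. For Hausdorffness, the points $\mathscr{O}$ and $\boldsymbol{y}$ are separated by $U(\mathscr{O})$ and $\{\boldsymbol{y}\}\cup M_1$, which are disjoint because $M_1\subseteq\mathcal{I}_M$; any non-zero $\boldsymbol{x}\in\mathscr{B}_{\omega}^{\Rsh}(\boldsymbol{F}_{\min})$ is isolated in $\tau$ by Proposition~\ref{proposition-3.1} and lies in $M_n$ for at most one $n$, so $\{\boldsymbol{x}\}$ and a sufficiently large tail $\{\boldsymbol{y}\}\cup M_n$ are disjoint open sets. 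For joint continuity of the extended multiplication, the only new pairs of arguments involve $\boldsymbol{y}$, and the corresponding product is always $\mathscr{O}$; the strict monotonicity of $(i_n)$ ensures that for any fixed non-zero $\boldsymbol{x}\in\mathscr{B}_{\omega}^{\Rsh}(\boldsymbol{F}_{\min})$ there is a smallest $n_{\boldsymbol{x}}$ with $\{\boldsymbol{x}\}\cdot M_{n_{\boldsymbol{x}}}=M_{n_{\boldsymbol{x}}}\cdot\{\boldsymbol{x}\}=\{\mathscr{O}\}$, and a direct inspection gives $M_n\cdot M_n=\{\mathscr{O}\}$ because no inner coordinates of two elements of $M_n$ ever coincide. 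I expect the main subtlety to lie in arranging the enumeration so that the off-diagonal triples $(i_{2j-1},k_{2j-1},i_{2j})$ simultaneously belong to $\mathscr{B}_{\omega}^{\Rsh}(\boldsymbol{F}_{\min})$ and to $\mathcal{I}_M$; once that bookkeeping is in place, the hypothesis $\mathcal{I}_M\cap U(\mathscr{O})=\varnothing$ feeds directly into Hausdorffness of $\tau_S$, and the resulting topological semigroup shows that $(\mathscr{B}_{\omega}^{\Rsh}(\boldsymbol{F}_{\min}),\tau)$ cannot be $\mathscr{H\!T\!S}$-closed.
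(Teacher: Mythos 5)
Your construction is exactly the one the paper intends: the paper derives Proposition~\ref{proposition-4.9} directly from the proof of Lemma~\ref{lemma-4.8}, and you faithfully reproduce that argument (the adjoined point $\boldsymbol{y}$, the tails $M_n$ of off-diagonal elements, the extended zero multiplication and the neighbourhood base $\{\boldsymbol{y}\}\cup M_n$), correctly substituting the hypothesis $\left(\varphi^{-1}(M)\cup\psi^{-1}(M)\right)\cap U(\mathscr{O})=\varnothing$ for the closedness of $\mathcal{I}_M$ that Lemma~\ref{lemma-4.8} extracts from continuity of inversion. This is essentially the same proof as the paper's, carried out at the same level of detail.
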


Theorem~\ref{theorem-2.4} and Lemmas~\ref{lemma-4.7}, \ref{lemma-4.8} imply

\begin{theorem}\label{theorem-4.10}
Let $\mathscr{F}$ be a some family of atomic subsets of $\omega$. Then a Hausdorff topological semigroup $\boldsymbol{B}_{\omega}^{\mathscr{F}}$ with the compact band is an $\mathscr{H\!T\!S}$-closed topological semigroup. Moreover, a Hausdorff topological inverse semigroup $\boldsymbol{B}_{\omega}^{\mathscr{F}}$ is an $\mathscr{H\!T\!S}$-closed topological semigroup if and only the band $E(\boldsymbol{B}_{\omega}^{\mathscr{F}})$ is compact.
\end{theorem}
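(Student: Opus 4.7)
The plan is to reduce the entire statement to the pair of lemmas already proved for $\mathscr{B}_{\omega}^{\Rsh}(\boldsymbol{F}_{\min})$, using the algebraic isomorphism supplied by Theorem~\ref{theorem-2.4}. Since $\mathfrak{f}\colon \boldsymbol{B}_{\omega}^{\mathscr{F}}\to \mathscr{B}_{\omega}^{\Rsh}(\boldsymbol{F}_{\min})$ is a semigroup isomorphism, transporting a topology along $\mathfrak{f}$ carries the property of being a (Hausdorff topological, resp.\ inverse, resp.\ $\mathscr{H\!T\!S}$-closed) semigroup in both directions; likewise $\mathfrak{f}$ maps $E(\boldsymbol{B}_{\omega}^{\mathscr{F}})$ bijectively onto $E(\mathscr{B}_{\omega}^{\Rsh}(\boldsymbol{F}_{\min}))$, so compactness of one band is equivalent to compactness of the other. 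Hence it suffices to establish both assertions with $\boldsymbol{B}_{\omega}^{\mathscr{F}}$ replaced by $\mathscr{B}_{\omega}^{\Rsh}(\boldsymbol{F}_{\min})$.

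For the first assertion, assume $(\mathscr{B}_{\omega}^{\Rsh}(\boldsymbol{F}_{\min}),\tau)$ is a Hausdorff topological semigroup whose band is compact. Suppose it sits as a subsemigroup in some Hausdorff topological semigroup $T\in\mathscr{H\!T\!S}$. Apply Lemma~\ref{lemma-4.7} directly (its hypothesis is exactly what we have assumed) to conclude that $\mathscr{B}_{\omega}^{\Rsh}(\boldsymbol{F}_{\min})$ is closed in $T$. Since $T$ was arbitrary, this is precisely the statement that $\mathscr{B}_{\omega}^{\Rsh}(\boldsymbol{F}_{\min})$ is $\mathscr{H\!T\!S}$-closed.

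For the ``moreover'' part, the implication $(\Leftarrow)$ is the same application of Lemma~\ref{lemma-4.7} (topological inverse semigroup is a special case of topological semigroup). For the implication $(\Rightarrow)$, assume that $(\mathscr{B}_{\omega}^{\Rsh}(\boldsymbol{F}_{\min}),\tau)$ is a Hausdorff topological inverse semigroup that is $\mathscr{H\!T\!S}$-closed. This is exactly the hypothesis of Lemma~\ref{lemma-4.8}, whose conclusion is that $E(\mathscr{B}_{\omega}^{\Rsh}(\boldsymbol{F}_{\min}))$ is compact. Transporting back through $\mathfrak{f}$ yields compactness of $E(\boldsymbol{B}_{\omega}^{\mathscr{F}})$.

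There is essentially no obstacle here beyond making the transport along $\mathfrak{f}$ explicit: the real content is in Lemmas~\ref{lemma-4.7} and~\ref{lemma-4.8}, which have already been proved. The one point that deserves a brief verbal check in the write-up is that the continuity of inversion is intrinsic (it is preserved by the isomorphism $\mathfrak{f}$), so the hypotheses of Lemma~\ref{lemma-4.8} genuinely apply after transport; this is immediate because $\mathfrak{f}$ sends inverses to inverses by Proposition~\ref{proposition-2.3}.
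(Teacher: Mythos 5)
Your proposal is correct and coincides with the paper's own argument: the paper derives Theorem~\ref{theorem-4.10} precisely by combining the isomorphism of Theorem~\ref{theorem-2.4} with Lemma~\ref{lemma-4.7} (sufficiency of the compact band) and Lemma~\ref{lemma-4.8} (necessity in the inverse case). Your explicit remark that $\mathfrak{f}$ transports the band, the topology, and the continuity of inversion is exactly the routine verification the paper leaves implicit.
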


Example~\ref{example-4.11} and Proposition~\ref{proposition-4.12} imply that the converse statement to Lemma~\ref{lemma-4.7} (and hence to the first statement of Theorem~\ref{theorem-2.4}) is not true.

\begin{example}\label{example-4.11}
For any positive integer $n$ we denote
\begin{equation*}
  U_n(\mathscr{O})=\{\mathscr{O}\}\cup\bigcup\left\{\boldsymbol{F}_{\min}^{(i,j)_{\Rsh}}\colon n\leqslant i<j\right\}.
\end{equation*}
We define a topology $\tau_1$ on the semigroup $\mathscr{B}_{\omega}^{\Rsh}(\boldsymbol{F}_{\min})$ in the following way:
\begin{itemize}
  \item[$(i)$] any non-zero element of the semigroup $\mathscr{B}_{\omega}^{\Rsh}(\boldsymbol{F}_{\min})$ is an isolated point in $(\mathscr{B}_{\omega}^{\Rsh}(\boldsymbol{F}_{\min}),\tau_1)$;
  \item[$(ii)$] the family $\mathscr{B}_1(\mathscr{O})=\left\{U_n(\mathscr{O})\colon n\in\omega\right\}$ is the base of the topology $\tau_1$ at the zero  $\mathscr{O}$.
\end{itemize}
It is obvious that $(\mathscr{B}_{\omega}^{\Rsh}(\boldsymbol{F}_{\min}),\tau_1)$ is a Hausdorff topological space.
\end{example}

\begin{proposition}\label{proposition-4.12}
$(\mathscr{B}_{\omega}^{\Rsh}(\boldsymbol{F}_{\min}),\tau_1)$ is  an $\mathscr{H\!T\!S}$-closed topological semigroup.
\end{proposition}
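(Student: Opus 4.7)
The plan proceeds in two parts. First I would verify that $\tau_1$ is a Hausdorff topological semigroup topology. Since every non-zero element is isolated, continuity of multiplication reduces to pairs involving $\mathscr{O}$. The inclusion $U_n\cdot U_n\subseteq U_n$ at $(\mathscr{O},\mathscr{O})$ follows because a nontrivial product $(i_1,k_1,j_1)\cdot(i_2,k_2,j_2)$ of two factors of $U_n$ forces $n\leq i_1<j_1=i_2<j_2$; and for a mixed pair $(\mathscr{O},y)$ or $(y,\mathscr{O})$ with $y=(l,m,p)\neq\mathscr{O}$, any $n>\max\{l,p\}$ makes $U_n\cdot\{y\}=\{y\}\cdot U_n=\{\mathscr{O}\}$. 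Hausdorffness was already noted in Example~\ref{example-4.11}.

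For $\mathscr{H\!T\!S}$-closedness I would suppose toward contradiction that some Hausdorff topological semigroup $T$ contains $(\mathscr{B}_{\omega}^{\Rsh}(\boldsymbol{F}_{\min}),\tau_1)$ as a dense proper subsemigroup and pick $x\in T\setminus\mathscr{B}_{\omega}^{\Rsh}(\boldsymbol{F}_{\min})$. By Lemmas~\ref{lemma-4.1} and~\ref{lemma-4.6} the zero of $T$ is $\mathscr{O}$ and $x\neq\mathscr{O}$ with $x^2=\mathscr{O}$. Choose disjoint open neighbourhoods $V(x),V(\mathscr{O})$ in $T$, and use continuity at $(x,x)$ to shrink to a nbhd $W(x)\subseteq V(x)$ with $W(x)\cdot W(x)\subseteq V(\mathscr{O})$; any idempotent $a\in W(x)\cap\mathscr{B}_{\omega}^{\Rsh}(\boldsymbol{F}_{\min})$ would satisfy $a=a^2\in V(x)\cap V(\mathscr{O})=\varnothing$, so $W(x)$ meets no idempotent of $\mathscr{B}_{\omega}^{\Rsh}(\boldsymbol{F}_{\min})$. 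Extract a net $(x_\alpha)\to x$ of pairwise distinct non-idempotent isolated points of $W(x)\cap\mathscr{B}_{\omega}^{\Rsh}(\boldsymbol{F}_{\min})$; by Hausdorffness this net does not $\tau_1$-converge to $\mathscr{O}$, so after passing to a subnet and using the finiteness of the $k$-coordinate range one of the following holds:
\begin{enumerate}
\item[$(A)$] $x_\alpha=(i^*,k^*,j_\alpha)$ with $i^*<n_0$ and $k^*$ fixed, and $j_\alpha\to\infty$;
\item[$(B)$] $x_\alpha=(i_\alpha,k_\alpha,j_\alpha)$ with $i_\alpha>j_\alpha$ and $i_\alpha\to\infty$.
\end{enumerate}

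The key technical device is that any $\tau_1$-closed subset $C\subseteq\mathscr{B}_{\omega}^{\Rsh}(\boldsymbol{F}_{\min})\setminus\{\mathscr{O}\}$ extends to a $T$-closed set $\widehat C$ with $\widehat C\cap\mathscr{B}_{\omega}^{\Rsh}(\boldsymbol{F}_{\min})=C$ and $\mathscr{O}\notin\widehat C$, so that $V(\mathscr{O})\setminus\widehat C$ remains an open neighbourhood of $\mathscr{O}$ in $T$ disjoint from $V(x)$. In case $(A)$ I would take $C_A=\{(i^*,k,j)\in\mathscr{B}_{\omega}^{\Rsh}(\boldsymbol{F}_{\min}):j>i^*\}$, which is $\tau_1$-closed because $U_n\cap C_A=\varnothing$ for every $n>i^*$; continuity at $(x,\mathscr{O})$ furnishes $a=x_{\alpha_0}\in W(x)$ with $j_{\alpha_0}\geq M$ and a basic $U_M$ satisfying $a\cdot U_M\subseteq V(\mathscr{O})\setminus\widehat{C_A}$, yet the specific product $a\cdot(j_{\alpha_0},0,j_{\alpha_0}+1)=(i^*,0,j_{\alpha_0}+1)$ lies in $C_A$, a contradiction. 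In case $(B)$ I would instead take $C_B=\{(i,0,j)\in\mathscr{B}_{\omega}^{\Rsh}(\boldsymbol{F}_{\min}):i\geq j\}$, which is $\tau_1$-closed because $U_n\cap C_B=\varnothing$ for every $n$; continuity at $(\mathscr{O},x)$ furnishes $U_M\cdot W'(x)\subseteq V(\mathscr{O})\setminus\widehat{C_B}$, and for $a=x_{\alpha_0}\in W'(x)$ with $i_{\alpha_0}\geq M+1$ the product $(i_{\alpha_0}-1,0,i_{\alpha_0})\cdot a=(i_{\alpha_0}-1,0,j_{\alpha_0})$ lies in $C_B$ (since $i_{\alpha_0}-1\geq j_{\alpha_0}$), again a contradiction. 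The main obstacle I anticipate is the careful bookkeeping of these successive neighbourhood shrinkings in $T$ — first eliminating idempotents from $W(x)$ via $x^2=\mathscr{O}$, then pruning $V(\mathscr{O})$ of the ``bad'' closed set $\widehat C$ — together with verifying, by explicit intersection with the basic $U_n$, that $C_A$ and $C_B$ are indeed $\tau_1$-closed.
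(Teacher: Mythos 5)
Your argument is correct and follows essentially the same route as the paper's proof: reduce to a dense proper extension, take $x$ outside $\mathscr{B}_{\omega}^{\Rsh}(\boldsymbol{F}_{\min})$ with $x^{2}=\mathscr{O}$, split according to whether the points of $\mathscr{B}_{\omega}^{\Rsh}(\boldsymbol{F}_{\min})$ accumulating at $x$ have bounded first coordinate (your case $(A)$) or satisfy $i>j$ with $i\to\infty$ (your case $(B)$), and in each case contradict continuity at $(x,\mathscr{O})$, respectively $(\mathscr{O},x)$, by multiplying with elements of the form $(j,0,j+1)$, respectively $(i-1,0,i)$, taken from a basic neighbourhood $U_M(\mathscr{O})$. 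The only substantive difference is that where the paper invokes Theorem~9 of Stepp to exclude the idempotents from a neighbourhood of $x$, you obtain this elementarily from $x^{2}=\mathscr{O}$ together with Hausdorffness, and your $\widehat{C}$-device plays the role of the paper's reduction to neighbourhoods of $\mathscr{O}$ whose traces are the basic sets $U_m(\mathscr{O})$; both variants are sound.
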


\begin{proof}[\textsl{Proof}]
First we show that the semigroup operation is continuous in $(\mathscr{B}_{\omega}^{\Rsh}(\boldsymbol{F}_{\min}),\tau_1)$. Since every non-zero element of the semigroup $(\mathscr{B}_{\omega}^{\Rsh}(\boldsymbol{F}_{\min}),\tau_1)$ is an isolated point, it is complete
to show that the semigroup operation in $(\mathscr{B}_{\omega}^{\Rsh}(\boldsymbol{F}_{\min}),\tau_1)$ is continuous at zero. Fix an arbitrary $(i,k,j)\in \mathscr{B}_{\omega}^{\Rsh}(\boldsymbol{F}_{\min})\setminus \{\mathscr{O}\}$. Then for $n=\max\{i,j\}+1$ we have that
\begin{equation*}
  (i,k,j)\cdot U_n(\mathscr{O})=U_n(\mathscr{O})\cdot (i,k,j)=\{\mathscr{O}\}\subset U_n(\mathscr{O}).
\end{equation*}
Also for any $n\in\omega$ we have that
\begin{equation*}
  U_n(\mathscr{O})\cdot U_n(\mathscr{O})\subseteq U_n(\mathscr{O}).
\end{equation*}
Therefore $(\mathscr{B}_{\omega}^{\Rsh}(\boldsymbol{F}_{\min}),\tau_1)$ is a topological semigroup.

Suppose to the contrary that there exists a Hausdorff topological semigroup $S$ which contains $(\mathscr{B}_{\omega}^{\Rsh}(\boldsymbol{F}_{\min}),\tau_1)$ as a non-closed subsemigroup. Since the closure of
a subsemigroup in a topological semigroup is a subsemigroup (see \cite[page~9]{Carruth-Hildebrant-Koch-1983}), without loss of
generality we can assume that $(\mathscr{B}_{\omega}^{\Rsh}(\boldsymbol{F}_{\min}),\tau_1)$ is a dense proper subsemigroup of $S$.

Fix an arbitrary $\boldsymbol{x}\in S\setminus\mathscr{B}_{\omega}^{\Rsh}(\boldsymbol{F}_{\min})$. By Lemmas~\ref{lemma-4.1} and \ref{lemma-4.6} we have that
\begin{equation*}
\boldsymbol{x}\cdot \boldsymbol{x}=\boldsymbol{x}\cdot \mathscr{O}=\mathscr{O}\cdot\boldsymbol{x}=\mathscr{O}.
\end{equation*}
Fix any positive integer $n$. Let $W(\mathscr{O})$ be an open neighbourhood of zero $\mathscr{O}$ in $S$ such that $W(\mathscr{O})\cap \mathscr{B}_{\omega}^{\Rsh}(\boldsymbol{F}_{\min})= U_n(\mathscr{O})$. The continuity of the semigroup operation in $S$ implies that there exist open neighbourhoods $V(\boldsymbol{x})$, $V(\mathscr{O})$ and $U(\mathscr{O})$ of the points $\boldsymbol{x}$ and $\mathscr{O}$ in the space $S$, respectively, such that
\begin{equation*}
V(\boldsymbol{x})\cdot V(\mathscr{O})\subseteq U(\mathscr{O}), \quad V(\mathscr{O})\cdot V(\boldsymbol{x})\subseteq U(\mathscr{O}), \quad V(\boldsymbol{x})\cdot V(\boldsymbol{x})\subseteq U(\mathscr{O}),
\end{equation*}
\begin{equation*}
V(\boldsymbol{x})\cap U(\mathscr{O})=\varnothing \quad \hbox{and} \quad V(\mathscr{O})\subseteq U(\mathscr{O})\subseteq W(\mathscr{O}).
\end{equation*}

Theorem 9 of \cite{Stepp=1975} implies that $E(\mathscr{B}_{\omega}^{\Rsh}(\boldsymbol{F}_{\min}))$ is a closed subset of $S$. Hence, we may assume that $V(\boldsymbol{x})\cap E(\mathscr{B}_{\omega}^{\Rsh}(\boldsymbol{F}_{\min}))=\varnothing$, and moreover $U(\mathscr{O})\cap \mathscr{B}_{\omega}^{\Rsh}(\boldsymbol{F}_{\min})= U_m(\mathscr{O})$ and $V(\mathscr{O})\cap \mathscr{B}_{\omega}^{\Rsh}(\boldsymbol{F}_{\min})= U_l(\mathscr{O})$ for some positive integers $l$ and $m$ such that $l\geqslant m\geqslant n$.

Then conditions
\begin{equation*}
V(\boldsymbol{x})\cdot V(\mathscr{O})\subseteq U(\mathscr{O}) \qquad \hbox{and} \qquad V(\boldsymbol{x})\cap U(\mathscr{O})=\varnothing
\end{equation*}
imply that there exists on open neighbourhood $V_1(\boldsymbol{x})\subseteq V(\boldsymbol{x})$ of the point $\boldsymbol{x}$ in the space $S$ such that
\begin{equation*}
  V_1(\boldsymbol{x})\cap\left(\bigcup\left\{\boldsymbol{F}_{\min}^{(i,s)_{\Rsh}}\colon s\in\omega\right\}\right)=\varnothing
\end{equation*}
for any non-negative integer $i<m$. This and Theorem 9 of \cite{Stepp=1975} imply that there exists an open neighbourhood $V_2(\boldsymbol{x})\subseteq V(\boldsymbol{x})$ of the point $\boldsymbol{x}$ in $S$ such that
\begin{equation*}
V_2(\boldsymbol{x})\cap \mathscr{B}_{\omega}^{\Rsh}(\boldsymbol{F}_{\min})\subseteq \bigcup\left\{\boldsymbol{F}_{\min}^{(i,j)_{\Rsh}}\colon i>j, \; i,j\in\omega\right\}.
\end{equation*}
Hence there exists an infinite sequence $\{(i_p,k_p,j_p)\}_{p\in\omega}$ in $V_2(\boldsymbol{x})$ such that the sequence $\{i_p\}_{p\in\omega}$ is increasing and $j_p\leqslant i_p-1$ for any $p\in\omega$. The definition of the topology $\tau_1$ implies that there exists an element $(i_{p_0},k_{p_0},j_{p_0})$ of the sequence $\{(i_p,k_p,j_p)\}_{p\in\omega}$ such that
\begin{equation*}
\boldsymbol{F}_{\min}^{(i_{p_0}-1,i_{p_0})_{\Rsh}}\subseteq U_l(\mathscr{O})\subseteq V(\mathscr{O}).
\end{equation*}
Then we have that
\begin{equation*}
\boldsymbol{F}_{\min}^{(i_{p_0}-1,i_{p_0})_{\Rsh}}\cdot (i_{p_0},k_{p_0},j_{p_0})\subseteq \boldsymbol{F}_{\min}^{(i_{p_0}-1,j_{p_0})_{\Rsh}} \nsubseteq U_m(\mathscr{O}),
\end{equation*}
which  contradicts the inclusion $ V(\mathscr{O})\cdot V(\boldsymbol{x})\subseteq U(\mathscr{O})$. The obtained contradiction implies that $\boldsymbol{x}$ is not an accumulation point of $\mathscr{B}_{\omega}^{\Rsh}(\boldsymbol{F}_{\min})$ in the topological space $S$, and hence the statement of the proposition holds.
\end{proof}

\section*{\textbf{Acknowledgements}}

The authors acknowledge the referee for their comments and suggestions.

\end{document}